\documentclass[a4paper,reqno,10pt]{amsart}

\usepackage[all]{xy}
\usepackage{amssymb}
\usepackage{graphicx}
\usepackage{amsmath,amsthm,amssymb,xypic,verbatim}

\newcommand{\p}{\mathbb{P}}

\newcommand{\N}{\mathbb{N}}

\newcommand{\T}{\mathbb{T}}

\newcommand{\oo}{\mathcal{O}}

\newcommand{\Sec}{\mathop{{\rm{\mathbb S}ec}}\nolimits}
\newcommand{\Span}[1]{\langle#1\rangle}

\theoremstyle{plain}

\newtheorem*{theorem}{Theorem}
\newtheorem{thm}{Theorem}
\newtheorem{pro}[thm]{Proposition}
\newtheorem{lem}[thm]{Lemma}   
\newtheorem{cor}[thm]{Corollary}
\newtheorem{claim}{Claim}
\theoremstyle{definition}

\newtheorem{notaz}[thm]{Notation}

\newtheorem{es}[thm]{Example}

\newtheorem{dfn}[thm]{Definition}
\newtheorem{rmk}[thm]{Remark}

\newcommand{\codim}[0]{\operatorname{codim}}

\begin{document}

\title[From non defectivity to identifiability]{From non defectivity to identifiability}
\date{\today}
\subjclass[2010]{Primary 15A69, 15A72, 11P05; Secondary 14N05, 15A69}
\keywords{Tensor decomposition, Waring decomposition, identifiability,
defective varieties}

\begin{abstract}
A projective variety $X\subset\p^N$ is $h$-identifiable if the generic
element in its $h$-secant variety uniquely determines $h$ points on
$X$. In this paper we propose an entirely new approach to study
identifiability, connecting it to the notion of secant defect.
In this way we are able to improve all known bounds on
identifiability. In particular we give optimal bounds for some  Segre and Segre-Veronese varieties and provide the
first identifiability statements for Grassmann varieties.
\end{abstract}

\author{Alex Casarotti}
\address{Dipartimento di Matematica e Informatica, Universit\`a di Ferrara, Via Machiavelli 35, 44121 Ferrara, Italy}
\email{csrlxa@unife.it}

\author{Massimiliano Mella}
\address{Dipartimento di Matematica e Informatica, Universit\`a di Ferrara, Via Machiavelli 35, 44121 Ferrara, Italy}
\email{mll@unife.it}
\maketitle
\section*{Introduction}
The notion of identifiability is ubiquitous both in applied and
classical algebraic geometry. In general we say that an
element $p$ of a projective space $\p^N$ is $h$-identifiable via a variety
$X$ if there is a unique way to write $p$ as linear combination of $h$
elements of $X$. In the classical setting this
very often translates into rationality problems and it is linked to the
existence of birational parameterizations. In the applied set up one
usually considers a tensor space and the identifiability allows to
reconstruct a tensor via a subset of special tensors defined by rank
conditions or other special requirements. For
applications ranging  from
biology to  Blind Signal Separation, data compression algorithms, and
analysis of mixture models, \cite{DDL1} \cite{DDL2} \cite{DDL3} \cite{KADL}
\cite{Si}, uniqueness of decompositions allows to solve the problem
once a solution is determined.
For all these reasons it is interesting and often crucial to
understand identifiability.

Over a decade ago the notion of $h$-weakly defective varieties has
been connected to identifiability of polynomials, \cite{Me}. This
provided the first systematic study of identifiability for Veronese
varieties. More
recently with the work of Luca Chiantini and Giorgio Ottaviani, \cite{CO1}, weakly
defective varieties have been substituted by $h$-tangentially weakly
defective varieties to study identifiability problems.
In both approaches to provide identifiability one has to check the
behavior of special linear systems and quite often this is done by an
ad hoc degeneration argument. As a consequence identifiability has been
proved in very few cases  and quite often the result obtained are not expected to be
sharp, \cite{CO1} \cite{BDdG} \cite{BC} \cite{BCO} \cite{Kr}.

In this paper we want to develop an
entirely new approach to study generic identifiability, see Definition~\ref{def:identifiability} for the precise set
up.  Starting from the seminal paper \cite{CC10}, where the geometry
of contact loci has been carefully studied, and the improvement
presented in \cite{BBC}, we  derive identifiability statements for non secant
defective varieties. Even if new this is not really surprising since  weakly defectiveness and
tangentially weakly defectiveness, thanks to Terracini Lemma, have
secant defectiveness as  a common ancestor.
With this new approach we are
able to translate all the literature on defective varieties into
identifiability statements, providing in many cases sharp
classification of $h$-identifiability. 
One of the results we prove in this direction is the following conditional
relation between identifiability and defectivity, we refer to
section~\ref{sec:notation} for the necessary definitions.

\begin{theorem}
  \label{thm:intro} Let $X\subset\p^N$ be an irreducible reduced
  variety. Assume that $h>\dim X$, $X$ is not $(h-1)$-tangentially
  weakly defective and it is not $h$-identifiable. Then $X$ is $(h+1)$-defective.
\end{theorem}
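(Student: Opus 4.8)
The plan is to prove the contrapositive: assuming $X$ is \emph{not} $(h+1)$-defective (and that $h>\dim X$ and $X$ is not $(h-1)$-tangentially weakly defective), I will show that $X$ is $h$-identifiable. After the harmless reduction to $X$ non-degenerate and to the range in which the abstract secant map $\pi_h\colon\mathcal{Ab}\,\sigma_h(X)\to\sigma_h(X)$ is generically finite, one first checks that non-$(h+1)$-defectivity forces non-$h$-defectivity: if $\sigma_{h+1}(X)\neq\p^N$ this is the classical inequality $\dim\sigma_h(X)\geq\dim\sigma_{h+1}(X)-\dim X-1$ together with $\dim\sigma_{h+1}(X)=(h+1)(\dim X+1)-1$, and the case $\sigma_{h+1}(X)=\p^N$ is absorbed by the genericity reduction; moreover $X$ is not $(h-1)$-defective since secant defectivity is implied by tangential weak defectivity. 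So $\pi_h$ is dominant and generically finite, and "not $h$-identifiable" means $\deg\pi_h\geq 2$.

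Now fix a general $p\in\sigma_h(X)$ and two distinct decompositions $A=\{x_1,\dots,x_h\}$, $B=\{y_1,\dots,y_h\}$ with $p\in\langle A\rangle\cap\langle B\rangle$; since $p$ is general and $\pi_h$ is generically finite, both are general $h$-tuples. The engine is the contact locus $\Gamma$ of $p$ studied in \cite{CC10} and refined in \cite{BBC}: by Terracini's Lemma $T_p\sigma_h(X)=\langle T_{x_1}X,\dots,T_{x_h}X\rangle=\langle T_{y_1}X,\dots,T_{y_h}X\rangle$, so $\Gamma=\{\,w\in X: T_wX\subseteq T_p\sigma_h(X)\,\}$ contains the support of \emph{every} decomposition of $p$; in particular $A\cup B\subseteq\Gamma$. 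Split according to $\dim\Gamma$. If $\dim\Gamma\geq 1$ then $X$ is $h$-tangentially weakly defective, and I would derive a contradiction with the hypothesis "not $(h-1)$-TWD": from $\langle T_{x_1}X,\dots,T_{x_{h-1}}X\rangle\subseteq T_p\sigma_h(X)$ one gets $\Gamma_{h-1}\subseteq\Gamma$, so $\Gamma_{h-1}$ is finite while $\Gamma$ is positive-dimensional; using $h>\dim X$ and non-$h$-defectivity, projection from the $h-1$ general tangent spaces $T_{x_1}X,\dots,T_{x_{h-1}}X$ does not contract the moving part of $\Gamma$, and a Terracini/Gauss-map computation shows the resulting variety has degenerate Gauss map, i.e.\ $X$ is $(h-1)$-TWD — contradiction. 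This is where $h>\dim X$ is essential; the Veronese surface at $h=\dim X=2$ (which is $2$-TWD, not $1$-TWD, not $3$-defective, and not $2$-identifiable) shows the statement genuinely fails for $h=\dim X$.

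It remains to treat $\dim\Gamma=0$: here $\Gamma$ is a finite set of at least $h+1$ points, all of whose tangent spaces lie in the fixed $\bigl(h(\dim X+1)-1\bigr)$-dimensional space $T_p\sigma_h(X)$, so they are far from general on $X$. I would use the monodromy of $\pi_h$ on the fibre over $p$, acting transitively on the decompositions of $p$ inside $\Gamma$, to show that two distinct decompositions $A,B\subseteq\Gamma$ must meet: either $|\Gamma|\leq 2h-1$ forces this, or if $|\Gamma|\geq 2h$ a disjoint pair would give $2h$ points whose $2h$ tangent spaces span no more than $T_p\sigma_h(X)$, contradicting non-defectivity (again $h>\dim X$ keeps the relevant secant counts sub-generic). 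Inner-projecting $X$ from a common point $x_1=y_1$ — and using that inner projection turns length-$h$ decompositions through $x_1$ into length-$(h-1)$ decompositions on $\pi_{x_1}X$ while preserving non-$(h+1)$-defectivity and the non-TWD hypotheses one level down — produces a general point $\bar p\in\sigma_{h-1}\bigl(\pi_{x_1}X\bigr)$ with two distinct length-$(h-1)$ decompositions, i.e.\ $\pi_{x_1}X$ is not $(h-1)$-identifiable; induction on $h$ (base case $h=\dim X+1$, where $\dim X+1$ general points of a non-$(\dim X+2)$-defective, non-$(\dim X)$-TWD variety span an \emph{exactly} $(\dim X+1)$-secant $\p^{\dim X}$) then gives the contradiction. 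The step I expect to be the hardest is precisely this last bookkeeping: verifying that $\bar p$ is general in $\sigma_{h-1}(\pi_{x_1}X)$ — inner projections of general points of $\sigma_h$ need not be general downstairs once $\dim X\geq 2$ — and that the two projected decompositions stay distinct and genuinely of length $h-1$; this is where the sharpened contact-locus estimates of \cite{CC10}, \cite{BBC} and the inequality $h>\dim X$ must be applied with care.
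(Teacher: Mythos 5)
Your opening reductions are sound and match the paper's architecture: one may assume $X$ is not $h$-defective (since $h$-defectivity always propagates to $(h+1)$-defectivity), and non-$h$-identifiability then forces the $h$-contact locus to be positive dimensional, i.e.\ $X$ is $h$-twd --- this is exactly Proposition~\ref{pro:non_twd_implica_ident}, proved by moving $z$ inside $\Span{x_1,\dots,x_h}$ and sweeping out the second decompositions. For the same reason your case $\dim\Gamma=0$ simply cannot occur for general $p$, so the whole monodromy/inner-projection/induction apparatus there is unnecessary; it is also flawed where it matters, since the step ``$2h$ points of $\Gamma$ with tangent spaces inside $\T_p\Sec_h(X)$ contradict non-defectivity'' applies Terracini to points that are constrained to lie on $\Gamma$, hence are not general on $X$, and Terracini computes secant dimensions only at general points.

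The genuine gap is in the case $\dim\Gamma\geq 1$, which is the entire content of the theorem: you must show that ``$h$-twd, not $(h-1)$-twd, $h>\dim X$, and $\pi^X_{h+1}$ generically finite'' is contradictory. Your sketch concludes that $X$ would be $(h-1)$-twd, but this does not follow and is not the right contradiction: a variety is typically $h$-twd without being $(h-1)$-twd (for irreducible contact loci one even has the strict growth $\gamma_{h-1}<\gamma_h$, point d) of Theorem~\ref{thm:proprieta_contact_locus}), and the image of the positive-dimensional $h$-contact locus under projection from $\T_{x_1}X,\dots,\T_{x_{h-1}}X$ carries no information about the $(h-1)$-contact locus. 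The contradiction must instead be with the generic finiteness of $\pi^X_{h+1}$, equivalently with $\tau^X_h$ not being of fiber type, and extracting it is where all the work lies: the paper builds the map $\chi:\Hilb_{h-1}(\Gamma)\dasharrow{\mathbb G}(\gamma-1,n-1)$ sending an $(h-1)$-tuple in $\Gamma$ to the $\tau_h$-image of its contact locus, uses $h>n$ to force $(h-1)\gamma>\gamma(n-\gamma)$ so that $\chi$ has positive-dimensional fibers, and converts those fibers into positive-dimensional fibers of $\tau_h$ via Lemma~\ref{lem;easy_rem}; a separate and delicate argument (Lemma~\ref{lem:clII_ok}) is needed when the contact locus is of type II, a case your proposal does not distinguish at all. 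None of this is present or replaced by an alternative in your write-up, so the proof is incomplete at its central step. (Your Veronese-surface remark on the necessity of $h>\dim X$ is correct, but note that $\nu_2(\p^2)$ is already $2$-defective, so it illustrates sharpness rather than a subtle failure mode.)
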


As already mentioned identifiability issues are particularly
interesting for tensor spaces. As a corollary we get the best asymptotic identifiability result so far for Segre, Segre-Veronese, and Grassmann varieties, that is, tensors and structured
tensors, see
section~\ref{sec:application}.  
As a sample  we  state the application to binary tensors.
\begin{theorem}
  \label{thm:segrep1} The Segre embedding of $n$ copies of $\p^1$,
  with $n\geq 5$ is
  $h$-identifiable for any $h\leq \lfloor\frac{2^n}{n+1}\rfloor-1$.  
\end{theorem}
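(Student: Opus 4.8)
The plan is to derive the statement directly from Theorem~\ref{thm:intro}, the only geometric input being the classical non-defectivity of Segre products of copies of $\p^1$. Throughout I write $X\subset\p^N$ for the Segre embedding of $n$ copies of $\p^1$: it is a smooth, irreducible, nondegenerate variety with $\dim X=n$ and $N+1=2^n$.

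First I would translate the numerical hypothesis. Since $h+1$ is an integer, the bound $h\leq\lfloor 2^n/(n+1)\rfloor-1$ is equivalent to $(h+1)(n+1)\leq 2^n=N+1$; hence $\sigma_{h+1}(X)$ has expected dimension $(h+1)(n+1)-1\leq N$ and is a proper subvariety of $\p^N$ whenever the inequality is strict. This is exactly the range in which the conclusion ``$X$ is $(h+1)$-defective'' of Theorem~\ref{thm:intro} is the genuine secant-defect condition. I would then invoke the theorem of Catalisano, Geramita and Gimigliano on the higher secant varieties of $\p^1\times\cdots\times\p^1$: for $n\neq4$ every $\sigma_k(X)$ has the expected dimension. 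This is the one place where the hypothesis $n\geq5$ enters — the excluded value $n=4$ being the classical defect of $\sigma_3\big((\p^1)^{\times4}\big)$ — and it gives that $X$ is not $(h+1)$-defective for every $h$ in our range.

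It remains to verify the last hypothesis of Theorem~\ref{thm:intro}, namely that $X$ is not $(h-1)$-tangentially weakly defective, and to organise the induction. For $h\leq\dim X=n$ the asserted identifiability lies in the range already treated in the literature on Segre varieties; this also serves as the base of the induction. For $h>\dim X$ I would argue by induction on $h$: if $X$ is $(h-1)$-identifiable then it is in particular not $(h-1)$-tangentially weakly defective (an $h$-identifiable variety is not $h$-tangentially weakly defective, cf.\ \cite{CO1}), so, since also $h>\dim X$ and $X$ is not $(h+1)$-defective by the previous step, the contrapositive of Theorem~\ref{thm:intro} yields that $X$ is $h$-identifiable; one propagates this up to $h=\lfloor 2^n/(n+1)\rfloor-1$.

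The main obstacle I expect is precisely the control of tangential weak defectivity: non-defectivity does not formally entail non-tangential-weak-defectivity, so one must run the bootstrapping above with care, or else analyse the contact loci of $(\p^1)^{\times n}$ directly along the lines of \cite{CC10} and \cite{BBC}. The small values $h\leq\dim X$ — which for $n=5$ in fact exhaust the whole range, since there $\lfloor 2^n/(n+1)\rfloor-1\leq n$ — likewise must be supplied from the literature rather than from Theorem~\ref{thm:intro}. Granting these two points, the remainder is the numerical bookkeeping of the two preceding paragraphs.
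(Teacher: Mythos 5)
Your overall strategy --- bootstrapping identifiability from the Catalisano--Geramita--Gimigliano non-defectivity of $(\p^1)^{\times n}$ via Theorem~\ref{thm:intro} --- is the right one, and your numerical translation of the bound ($h\le\lfloor 2^n/(n+1)\rfloor-1$ iff $(h+1)(n+1)\le 2^n$) is correct. But the induction step is broken at exactly the point you half-flag at the end. You propagate the invariant ``$X$ is $(h-1)$-identifiable'' and convert it into the hypothesis ``$X$ is not $(h-1)$-tangentially weakly defective'' by appealing to the implication ``identifiable $\Rightarrow$ not twd''. That implication is false: only the converse holds (Proposition~\ref{pro:non_twd_implica_ident}), and the paper explicitly exhibits, right after that proposition, Segre and Grassmann varieties that are $h$-identifiable yet $h$-twd (\cite{COV2}, \cite{BV}). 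So your inductive hypothesis does not reproduce the hypotheses of Theorem~\ref{thm:intro} at the next stage, and the bootstrap stalls after one step.

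The invariant that must be carried through the induction is ``not $k$-twd'' itself, and this is what the paper's machinery does: Lemma~\ref{lem:type2} shows that under non-defectivity (and non-$1$-twd) the contact-locus dimension $\gamma_k$ is strictly increasing, which forces $\gamma_j=0$ for small $j$ --- this is the base case, and it cannot simply be outsourced to ``the literature'', since the classical statements for small $h$ concern identifiability, not twd; then Theorem~\ref{th:ktwd}, in contrapositive form, says that if $X$ is not $(k-1)$-twd, $\pi_{k+1}^X$ is generically finite, $k>\dim X$ and $N\ge(k+1)(n+1)-1$, then $X$ is not $k$-twd, and it is this statement that can be iterated (packaged as Corollary~\ref{cor:twd_defect}, which needs the generic rank to exceed $2\dim X=2n$; this holds only for $n\ge 7$, e.g.\ $s_1^7=16>14$). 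For $n=5$ the whole range $h\le 4<\dim X$ is indeed classical, as you note. But for $n=6$ one has $\lfloor 2^6/7\rfloor=9<12=2\dim X$ while the range $h\le 8$ exceeds $\dim X=6$: this case falls into neither of your two escape clauses, and the paper must invoke the computer-aided verification of \cite{BC}. So besides repairing the induction you must treat $n=6$ separately.
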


Recall that the generic rank of the Segre embedding of $(\p^1)^n$
is $ \lceil\frac{2^n}{n+1}\rceil$, therefore our result shows generic
identifiability of all sub-generic binary tensors, qbits if you like
the quantum computing dictionary, in the perfect case, that is when
$\frac{2^n}{n+1}$ is an integer, and all but the
last one in general, as predicted by the conjecture posed in \cite{BC}.

The starting point of our analysis was the observation that, in all
known examples, when a variety $X$ is not $h$-identifiable then any
element in $\Sec_{h+1}(X)$ has infinitely many decompositions, \cite{CMO}
\cite{BBC} \cite{BCO} \cite{BV} \cite{COV1}. Going back to the ideas in \cite{Me} we
realized that  the  best way to
use this observation is to set a connection between the abstract
secant map and the tangential projection. Via this we
reduce the problem to study fiber type tangential projections.
The latter is accomplished via the construction of a map from the Hilbert scheme of points of the contact loci of
$h$-tangentially weakly defective varieties to a suitable Grassmannian. Under
the right assumptions this is proved to be of fiber type and it allows us to
connect defectivity and non identifiability. This, together with an improvement of the
contact loci geometry studied in \cite{CC10} and \cite{BBC}, lead us to derive
identifiability from non defectivity under very mild hypothesis.

\begin{theorem}
  \label{thm:twd_defect} Let $X\subset\p^N$ be a  smooth variety.
  Assume that  $\pi^X_k:\Sec_k(X)\to\p^N$ is generically finite and 
  $k>2\dim X$. 
  Then $X$
  is $(k-1)$-identifiable.
\end{theorem}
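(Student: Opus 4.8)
The plan is to deduce Theorem~\ref{thm:twd_defect} from the conditional statement Theorem~\ref{thm:intro} by verifying its hypotheses under the stronger assumption $k > 2\dim X$. Set $h = k-1$, so that $h > 2\dim X - 1 \geq \dim X$ (here $X$ is positive-dimensional, the zero-dimensional case being trivial), and the first hypothesis $h > \dim X$ of Theorem~\ref{thm:intro} is automatic. It remains to check (i) that $X$ is not $(h-1) = (k-2)$-tangentially weakly defective, and (ii) to rule out the conclusion that $X$ is $(h+1) = k$-defective; these two facts together with Theorem~\ref{thm:intro} force $X$ to be $h = (k-1)$-identifiable, which is the claim.

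First I would dispatch (ii): by hypothesis $\pi^X_k \colon \Sec_k(X) \to \p^N$ is generically finite, which by definition means exactly that $\Sec_k(X)$ has the expected dimension, i.e. $X$ is \emph{not} $k$-defective. So the alternative conclusion of Theorem~\ref{thm:intro} cannot hold, and we only need (i). For (i) the key point is the standard monotonicity, via Terracini's lemma, relating tangential weak defectivity, defectivity, and the dimension count: if $X$ were $(k-2)$-tangentially weakly defective, then the general tangential contact locus $\Gamma_{k-1}$ through $k-1$ general points would have positive dimension, and one shows — this is where the contact-loci geometry of \cite{CC10} and its refinement in \cite{BBC} enters, as advertised in the introduction — that this positive-dimensionality propagates upward to make $\Sec_k(X)$ defective, contradicting the generic finiteness of $\pi^X_k$. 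Concretely, the inequality $k > 2\dim X$ guarantees enough room: a $(k-2)$-tangentially weakly defective $X$ of dimension $n$ has $\dim \Sec_{k-1}(X) \leq N - 2$ forced by the weak defectivity, and combined with the bound on the contact locus one gets $\dim \Sec_k(X) < \min\{N, k(n+1)-1\}$.

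The main obstacle I anticipate is step (i): making the implication ``$(k-2)$-tangentially weakly defective $\Rightarrow$ $k$-defective'' precise and valid under exactly the hypothesis $k > 2\dim X$ rather than some larger bound. The naive Terracini argument shows that weak defectivity forces the tangent spaces at $k-1$ general points to fail to span, but translating a \emph{tangential} weak defect into an honest drop in $\dim \Sec_k(X)$ requires controlling how the tangential contact locus sits inside $X$ and how adding one more general point interacts with it; this is precisely the sort of contact-locus bookkeeping the paper develops, and the numerical hypothesis $k > 2\dim X$ is what closes the estimate. Once that implication is in hand, the theorem follows formally: $\pi^X_k$ generically finite rules out $k$-defectivity, hence rules out $(k-2)$-tangential weak defectivity, hence by Theorem~\ref{thm:intro} (whose remaining hypothesis $k-1 > \dim X$ we have checked) and the fact that $X$ is not $k$-defective, $X$ must be $(k-1)$-identifiable.
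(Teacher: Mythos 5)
Your reduction to Theorem~\ref{thm:intro} is logically sound in outline: with $h=k-1$ the hypothesis $h>\dim X$ follows from $k>2\dim X$, generic finiteness of $\pi_k^X$ rules out $k$-defectivity, and the contrapositive of Theorem~\ref{thm:intro} would then give $(k-1)$-identifiability \emph{provided} $X$ is not $(k-2)$-twd. The genuine gap is exactly that last item, and the justification you sketch for it rests on a false premise. You assert that $(k-2)$-tangential weak defectivity "forces $\dim \Sec_{k-1}(X)\leq N-2$" and that the positive-dimensional contact locus "propagates upward" to make $\Sec_k(X)$ defective by a Terracini dimension count. Neither implication holds: tangential weak defectivity is a statement about the tangency locus of $M_A=\Span{\T_{x_1}X,\ldots,\T_{x_h}X}$, not about the dimension of the secant variety, and there exist $h$-twd varieties that are neither $h$- nor $(h+1)$-defective (Remark~\ref{rmk:optimal_c} cites ${\mathbb G}(2,7)$, which is $3$-twd but not defective). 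So "not $(k-2)$-twd" cannot be extracted from non-defectivity by a dimension count alone; it is precisely the content that the paper's technical core exists to supply. Note also that your argument never invokes smoothness, whereas the paper needs it essentially (smooth $\Rightarrow$ not $1$-twd, the hypothesis of Lemma~\ref{lem:type2}) --- a sign that the working part of the proof has been skipped.

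For comparison, the paper's route (Corollary~\ref{cor:twd_defect}) is: (1) since $\pi_j^X$ is generically finite for all $j\leq k$ and $X$ is not $1$-twd, Lemma~\ref{lem:type2} forces the contact-locus dimensions $\gamma_j$ to increase strictly once positive; as $\gamma_j\leq\dim X$ this yields $\gamma_{k-\dim X}=0$, i.e. $X$ is not $(k-\dim X)$-twd; (2) because $k-\dim X>\dim X$, Theorem~\ref{th:ktwd} applies recursively to the minimal $j>k-\dim X$ at which twd could first reappear, producing a fiber-type $\pi_{j+1}^X$ with $j+1\leq k$, a contradiction; hence $X$ is not $j$-twd for any $j\leq k-1$; (3) Proposition~\ref{pro:non_twd_implica_ident} then gives $(k-1)$-identifiability directly. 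If you carry out steps (1)--(2) to certify "not $(k-2)$-twd" for your appeal to Theorem~\ref{thm:intro}, you will already have proved "not $(k-1)$-twd" and can conclude without Theorem~\ref{thm:intro} at all. As written, the proposal defers the entire mathematical content to an implication that is stated incorrectly and not proved.
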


The paper is structured as follows. In section~\ref{sec:main} we study
the geometry of contact locus and prove the main general results about
the connections between defectivity and non identifiability.
In the final section we apply our techniques to varieties that are
meaningful for tensor decomposition and streamline our argument for those.

We are much indebted with Luca Chiantini for many conversations on the
subject and for explaining us the connection between tangentially
weakly defective varieties and identifiability when we started to work
on the subject.

\section{Notation}\label{sec:notation}

We work over the complex field. A projective variety $X\subset\p^N$ is
non degenerate if it is not contained in any hyperplane.

Let $X\subset\p^N$ be an irreducible and reduced non degenerate
variety. Let $X^{(h)}$ be the $h$-th symmetric product of $X$,
that is the variety parameterizing unordered sets of $h$ points of $X$. 
Let $U_h^X\subset X^{(h)}$ be the smooth locus, given by sets of $h$
distinct points.
\begin{dfn}
  A point $z\in U^X_h$ represents a
set of $h$ distinct point, say $\{z_1,\ldots, z_h\}$. We say that a point $p\in \p^N$ is in the span of
 $z$, $p\in\langle z\rangle$,  if it is a linear combination of the
 $z_i$.
\end{dfn}

With this in mind we define


\begin{dfn} The {\it
    abstract $h$-Secant variety} is the irreducible and reduced variety
$$\textit{sec}_{h}(X):=\overline{\{(z,p)\in U_h^X\times\p^N| p\in z\}}\subset
X^{(h)}\times\p^N.$$ 

Let $\pi:X^{(h)}\times\p^N\to\p^N$ be the projection onto the second factor. 
The {\it $h$-Secant variety} is
$$\Sec_{h}(X):=\pi(sec_{h}(X))\subset\p^N,$$
and $\pi_h^X:=\pi_{|sec_{h}(X)}:sec_{h}(X)\to\p^N$ is the $h$-secant map of $X$.

The irreducible variety $\textit{sec}_{h}(X)$ has dimension $(hn+h-1)$.
 One says that $X$ is
\textit{$h$-defective}
if $$\dim\Sec_{h}(X)<\min\{\dim\textit{sec}_{h}(X),N\}.$$
\end{dfn}

\begin{rmk} If $X$ is $h$-defective then the $h$-secant map is of
  fiber type.
  
Note that a general point in $\sec_h(X)$  is a linear combination of
$h$ points of $X$. Thanks to the non degeneracy assumption  a general
point in $\sec_h(X)\subsetneq\p^N$  is not a linear combination of less points on $X$.

The tricky part in studying secant varieties is the closure. Many different things can happen, the $h$ points can
group in infinitely near cluster or positive dimensional intersection
can appear. As a matter of facts these special loci are really
difficult to control and the main advantage to use birational geometry
is the opportunity to get rid of them. 
\end{rmk}

\begin{dfn}
  Let $X\subset\p^N$ be a non degenerate subvariety. We say that a
  point $p\in\p^N$ has rank $h$ with respect to $X$ if $p\in \langle z\rangle$, for
  some $z\in U_h^X$ and $p\not\in \langle z^\prime\rangle$ for any $z^\prime\in
  U_{h^\prime}^X$, with $h^\prime<h$.
\end{dfn}

\begin{rmk}
  With this in mind it is easy to produce examples
of limits of rank $h$ points with different rank. If we let one of the
point degenerate to the span of the others we lower the rank. If we
let two points collapse the rank, generically, increases.
\end{rmk}

\begin{dfn}\label{def:identifiability} A point $p\in\p^N$
  is $h$-identifiable with respect to $X\subset\p^N$ if $p$ is of rank
  $h$ and $(\pi^X_h)^{-1}(p)$ is a single point. The variety $X$ is
  said to be $h$-identifiable if $\pi_h^X$ is a birational map, that
  is the general point of $\Sec_h(X)$ is $h$-identifiable
\end{dfn}

It is clear, by the above remark, that when $X$ is $h$-defective, or
more generally when $\pi_h^X$ is of fiber type $X$ is not $h$-identifiable.

The next ingredient we need to introduce is Terracini Lemma.
\begin{thm}[Terracini Lemma]\cite{CC1}
  \label{th:Terracini} Let $X\subset\p^N$ be an irreducible
  variety. Then we have
  \begin{itemize}
 \item for any $x_1,\ldots,x_k\in X$ and $z\in\Span{x_1,\ldots,x_k}$
$$\Span{\T_{x_1}X,\ldots,\T_{x_k}X}\subseteq \T_z\Sec_k(X), $$
 \item there is a dense open set $U\subset X^{(k)}$ such that
$$\Span{\T_{x_1}X,\ldots,\T_{x_k}X}=\T_z\Sec_k(X), $$
for a general point $z\in \Span{x_1,\ldots,x_k}$ with $(x_1,\ldots,x_k)\in U$.
  \end{itemize}
\end{thm}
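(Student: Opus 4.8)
The plan is to run the classical differential argument for the addition map in the affine cone. Write $\widehat X\subset\A^{N+1}$ for the affine cone over $X$ and $\widehat{\Sec}_k(X)\subset\A^{N+1}$ for the cone over $\Sec_k(X)$, and consider the summation morphism
\[
\Psi\colon\widehat X^{\times k}\longrightarrow\A^{N+1},\qquad (v_1,\dots,v_k)\longmapsto v_1+\cdots+v_k.
\]
Because, by the definition of $\sec_k(X)$, a general point of $\Sec_k(X)$ is a linear combination of $k$ points of $X$ (and a coincidence among the points only drops the rank), one checks that $\overline{\im\Psi}=\widehat{\Sec}_k(X)$, and I co-restrict $\Psi$ to this variety. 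The differential of $\Psi$ at $(v_1,\dots,v_k)$ is again summation, $d\Psi(w_1,\dots,w_k)=w_1+\cdots+w_k$ with $w_i$ in the embedded affine tangent space $\T_{v_i}\widehat X\subset\A^{N+1}$; since $\widehat X$ is a cone, for $v_i\neq0$ lying over $x_i\in X$ this space depends only on the ray through $v_i$, contains that ray, and projectivizes to $\T_{x_i}X$. Hence
\[
\im d\Psi_{(v_1,\dots,v_k)}=\T_{v_1}\widehat X+\cdots+\T_{v_k}\widehat X,
\]
which is exactly the affine cone over $\Span{\T_{x_1}X,\dots,\T_{x_k}X}$.

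For the first bullet I would use only that the differential of a morphism lands in the Zariski tangent space of the target, i.e. $\im d\Psi_{(v_1,\dots,v_k)}\subseteq\T_{\Psi(v_1,\dots,v_k)}\widehat{\Sec}_k(X)$ for every $(v_1,\dots,v_k)$. Given $x_1,\dots,x_k\in X$ and $z\in\Span{x_1,\dots,x_k}$, choose lifts $v_i$ of the $x_i$ and scalars with $z=\sum\lambda_iv_i$. If all $\lambda_i\neq0$, I apply the above at $(\lambda_1v_1,\dots,\lambda_kv_k)\in\widehat X^{\times k}$ and projectivize to get $\Span{\T_{x_1}X,\dots,\T_{x_k}X}\subseteq\T_z\Sec_k(X)$. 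If some $\lambda_i$ vanish, I perturb along a curve $s\mapsto(\lambda_1(s),\dots,\lambda_k(s))$ with all $\lambda_i(s)\neq0$ for $s\neq0$ and $\lambda_i(0)=\lambda_i$, and set $z(s)=\sum\lambda_i(s)v_i\to z$: the fixed linear subspace of $\A^{N+1}$, namely the cone over $\Span{\T_{x_1}X,\dots,\T_{x_k}X}$, lies in $\T_{z(s)}\widehat{\Sec}_k(X)$ for $s\neq0$, and since for each $f$ in the ideal of $\widehat{\Sec}_k(X)$ the differential $df$ is polynomial in the point, the identities ``$df(z(s))$ vanishes on that subspace'' pass to the limit $s\to0$; hence $\Span{\T_{x_1}X,\dots,\T_{x_k}X}\subseteq\T_z\Sec_k(X)$ in all cases.

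For the second bullet I would invoke generic smoothness. Restrict $\Psi$ to $V_0=(\widehat X^{\,\circ})^{\times k}$, where $\widehat X^{\,\circ}$ is the smooth locus of $\widehat X$ with the origin removed; this is a smooth variety still dominating $\widehat{\Sec}_k(X)$, and the ground field is $\C$, so at a general $p=(v_1,\dots,v_k)\in V_0$ the differential $d\Psi_p$ attains rank $\dim\widehat{\Sec}_k(X)$ while $\Psi(p)$ is a smooth point of $\widehat{\Sec}_k(X)$. At such a $p$ the inclusion $\im d\Psi_p\subseteq\T_{\Psi(p)}\widehat{\Sec}_k(X)$ is between spaces of the same dimension, hence an equality, so $\T_{v_1}\widehat X+\cdots+\T_{v_k}\widehat X=\T_{\Psi(p)}\widehat{\Sec}_k(X)$. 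Letting $U\subset X^{(k)}$ be the dense open locus of configurations $\{x_1,\dots,x_k\}$ underlying such $p$, and noting that for a fixed configuration in $U$ the points $z=\Psi(p)$ sweep out a dense subset of $\Span{x_1,\dots,x_k}$ as the scalars vary, one gets $\Span{\T_{x_1}X,\dots,\T_{x_k}X}=\T_z\Sec_k(X)$ for $(x_1,\dots,x_k)\in U$ and general $z$ in their span.

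The only delicate point is the passage from the generic equality to the inclusion valid at \emph{every} point: the Zariski tangent space of $\Sec_k(X)$ jumps up along its singular locus, so at special $z$ only the inclusion can hold, and one must also absorb the coefficients $\lambda_i$ that vanish; the limiting argument in the second paragraph is exactly what handles both at once. Everything else is the routine dictionary between affine cones and projective tangent spaces, plus a single appeal to generic smoothness in characteristic zero.
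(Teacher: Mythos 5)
The paper does not prove this statement: it is the classical Terracini Lemma, quoted with a citation to [CC02]. Your argument is the standard affine-cone proof of that result (differential of the summation map $\widehat X^{\times k}\to\A^{N+1}$, generic smoothness in characteristic zero for the equality, and a degeneration/closedness-of-vanishing argument for the everywhere-valid inclusion), and it is correct; the only points needing care --- that the general point of $\Span{x_1,\dots,x_k}$ for a general configuration is a general, hence smooth, point of $\Sec_k(X)$, and that $\sum_i\T_{v_i}\widehat X$ is unchanged under rescaling the $v_i$ --- are exactly the ones you address.
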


Terracini Lemma yields a direct consequence of $h$-defectiveness. If
$X$ is $h$-defective then the general fiber of $\pi_h^X$ has positive
dimension. Therefore by Terracini the general hyperplane tangent at
$h$ points of $X$ is singular along a positive dimensional subvariety.
This
property does not characterize defective varieties.
\begin{dfn}
  \label{def:weaklydef} Let $X\subset\p^N$ be a non degenerate
  variety. The variety $X$ is said $h$-weakly defective if the general
  hyperplane singular along $h$ general points is singular along a
  positive dimensional subvariety. 
\end{dfn}

There is a direct connection, proven in \cite{CC1}, between
$h$-weakly defectiveness and identifiability.
\begin{thm}
  \label{thm:nonwd_implies_ident} If $X$ is not $h$-weakly defective
  then it is $h$-identifiable.
\end{thm}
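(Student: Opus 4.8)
The plan is to prove the contrapositive: if $X$ is not $h$-identifiable then $X$ is $h$-weakly defective. First I would dispose of two reductions. We may assume we are in the range where the statement is non-vacuous, that is $\Sec_h(X)\subsetneq\p^N$ (otherwise there are no hyperplanes singular at $h$ general points), and we may assume $X$ is not $h$-defective, since $h$-defectivity forces $h$-weak defectivity via Terracini Lemma. Under these reductions $\pi_h^X$ is generically finite, and, not being birational, it has degree at least two; hence the fibre over a general $p\in\Sec_h(X)$ contains a general $z=\{p_1,\dots,p_h\}\in U^X_h$ with $p\in\Span{z}$ together with a second, distinct $z'=\{q_1,\dots,q_h\}$ with $p\in\Span{z'}$. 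We may also assume $h\geq 2$, the case $h=1$ being trivial since $\pi_1^X$ is an isomorphism.

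The engine of the argument is Terracini Lemma. For general $z$ and general $p\in\Span{z}$ its second part gives $\T_p\Sec_h(X)=\Span{\T_{p_1}X,\dots,\T_{p_h}X}$, a proper linear subspace of $\p^N$; the hyperplanes containing it form a positive-dimensional linear system, which is exactly the system of hyperplanes singular at $p_1,\dots,p_h$. The key observation is that this same span also controls the second decomposition: by the first part of Terracini Lemma applied to $z'$, one has $\T_{q_j}X\subseteq\T_p\Sec_h(X)$ for every $j$, so every hyperplane of that system is singular at each $q_j$ as well. Now fix a general such hyperplane $H$, and note that the system does not depend on $p$ but only on the fixed general $z$. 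As $p$ moves over $\Span{z}\cong\p^{h-1}$ the inclusion $\T_p\Sec_h(X)=\Span{\T_{p_i}X}\subseteq H$ persists, so the contact locus $\Gamma(H)$ of this \emph{single} hyperplane contains $q_j(p)$ for every $j$ and every general $p$. A dimension count on the second-decomposition correspondence inside $\sec_h(X)\times_{\Sec_h(X)}\sec_h(X)$ shows that over a general $z$ these second decompositions form an $(h-1)$-dimensional family dominating $\Span{z}$; hence, unless the second decomposition $z'(p)$ is actually independent of $p$, the locus $\Gamma(H)$ is positive-dimensional. In that case $H$ is the general hyperplane singular at the $h$ general points $p_1,\dots,p_h$ and it is singular along a positive-dimensional subvariety, so $X$ is $h$-weakly defective, as wanted.

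It remains to treat the degenerate case in which every second decomposition of the general $p\in\Span{z}$ spans the same $(h-1)$-plane $\Pi:=\Span{z}$. Then $\Pi\cap X$ contains at least $h+1$ points, and, since $p\in\Pi\cong\p^{h-1}$, every $h$-subset of $\Pi\cap X$ in linearly general position in $\Pi$ is again a decomposition of $p$. If $\Pi\cap X$ is infinite then $p$ has infinitely many decompositions, $\pi_h^X$ is of fibre type, and $X$ is $h$-defective, contrary to assumption. If $\Pi\cap X$ is finite, Terracini Lemma applied to the various $h$-subset decompositions again places the whole of $\Pi\cap X$ inside the contact locus of the general hyperplane singular at $p_1,\dots,p_h$; one then runs a dimension count on the incidence variety $\{(H,x):\T_xX\subseteq H,\ \Span{\T_{p_i}X}\subseteq H\}$ to conclude that the general such contact locus is forced to be exactly $\{p_1,\dots,p_h\}$, contradicting its containing the strictly larger finite set $\Pi\cap X$. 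The hard part of the whole proof is precisely this last bookkeeping — pinning down the exact dimension of the family of tangent hyperplanes in order to rule out spurious isolated contact points — and it is there that the full strength of non-weak-defectivity (together with the standing hypothesis that $h$ lies in the range where tangent hyperplanes exist) is used.
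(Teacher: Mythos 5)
This theorem is not proved in the paper: it is quoted from \cite{CC1}, and the only related argument the paper supplies is the two-line sketch proving Proposition~\ref{pro:non_twd_implica_ident} (the tangential analogue), which coincides exactly with the ``main case'' of your proposal: move $p$ inside $\Span{z}$, use Terracini to force every hyperplane containing $M_A=\Span{\T_{p_1}X,\dots,\T_{p_h}X}$ to be tangent along the supports of all the second decompositions $z'(p)$, and conclude that the contact locus is positive dimensional. That part of your argument is correct, and you deserve credit for making explicit the point the paper's sketch glosses over, namely that the argument only works if $z'(p)$ genuinely moves with $p$; if it does not, then $p\in\Span{z'}$ for all $p$ in $\Span{z}$ forces $\Span{z}=\Span{z'}$ and the extra tangency points stay inside the finite set $\Span{z}\cap X$.

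The gap is in your treatment of that degenerate case. You assert that a dimension count on the incidence variety $\{(H,x):\T_xX\subseteq H,\ M_A\subseteq H\}$ forces the contact locus of the general $H\supseteq M_A$ to be \emph{exactly} $\{p_1,\dots,p_h\}$. That does not follow: the hypothesis of non-$h$-weak-defectivity controls only the \emph{dimension} of the contact locus, and no dimension count on that incidence variety can exclude extra \emph{isolated} tangency points --- indeed the whole difficulty of the degenerate case is precisely that every hyperplane through $M_A$ acquires the additional zero-dimensional tangency locus $(\Span{z}\cap X)\setminus z$ for free, so the fibres of the incidence variety over the $H$'s all have the same dimension whether or not these extra points are present. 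What must actually be ruled out is the geometric configuration ``$h$ general points of $X$ span a $\p^{h-1}$ meeting $X$ in at least one further point, which is moreover a forced tangency point of every hyperplane containing $M_A$''; this is the genuine content of the theorem beyond the Terracini observation, and in \cite{CC1} it is handled by a separate argument (via tangential projections and the behaviour of $1$-weakly defective varieties, not by counting dimensions of the family of tangent hyperplanes). As written, your last step is a restatement of what needs to be proved rather than a proof of it --- your own phrase ``the hard part of the whole proof is precisely this last bookkeeping'' concedes as much. One further small caution: your opening reduction to $\Sec_h(X)\subsetneq\p^N$ should be flagged as a standing hypothesis of the theorem rather than a harmless reduction, since when $M_A=\p^N$ the weak-defectivity condition is vacuous while identifiability can still fail (e.g.\ the perfect, non-identifiable tensors of \cite{CMO}).
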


The main problem is that it is quite hard in general to verify if a
variety is $h$-weakly defective.

  To overcome this problem the notion of tangentially weakly defective
  varieties has been introduced, \cite{CO1}. Here we follow the
  notations of \cite{BBC}.

  For a subset $A=\{x_1,\ldots,x_h\}\subset X$ of general points we set
  $$ M_A := \langle\bigcup_{i}\T_{x_i}X\rangle.$$
By Terracini Lemma  the space $M_A$ is the tangent space to $\Sec_h(X)$
at a general point in $\langle A\rangle$.
\begin{dfn}
  The tangential $h$-contact locus $\Gamma_h:=:\Gamma(A)$ is the closure in $X$ of the union of all the irreducible components which contain at least one point of $A$, of the locus of points of $X$ where $M_A$ is tangent to $X$.
We will write $\gamma_h := \dim \Gamma(A)$.
We say that $X$ is $h$-twd (tangentially weakly defective) if $\gamma_h > 0$.
\end{dfn}

  \begin{rmk}
    It is clear that if $X$ is $h$-twd then it is $h$ weakly
    defective, it is $(h+1)$-twd and $\Gamma_h\subseteq \Gamma_{h+1}$. Using scrolls it is not too difficult to produce explicit
    examples of varieties that are $h$-weakly defective but are not
    $h$-twd, see also Remark~\ref{rmk:optimal_c}.
  \end{rmk}

For what follows it is useful to introduce also the notion of
tangential projection.
\begin{dfn}
  Let $X\subset\p^N$ be a variety and $A=\{x_1,\ldots,x_h\}\subset X$ a
  set of general points. The $h$-tangential projection (from A) of $X$ is
  $$\tau_h:X\dasharrow\p^M$$
  the linear projection from $M_A$. That is, by Terracini Lemma,   the projection from the
  tangent space of a
  general point $z\in \Span{A}$ of $\Sec_h(X)$ restricted to $X$.
\end{dfn}

\section{Relation between twd and defectivity}\label{sec:main}
We start  collecting  properties of the tangential contact loci that will be
useful for our purpose.
\begin{thm}
  \label{thm:proprieta_contact_locus} Let $X\subset\p^N$ be an
  irreducible, reduced, and non degenerate variety. Let $A\subset X$ be
  a set of $h$ general points and $\Gamma$ the associated contact
  locus. Assume that $\Sec_{h-1}(X)\subsetneq\p^N$.
  Then we have:
  \begin{itemize}
  \item[a)] $\Gamma$ is equidimensional and it is either irreducible
    (type I) or reduced (type II) with exactly $h$ irreducible component, each of them
    containing a single point of $A$ \cite[Proposition 3.9]{CC10},
    \item[b)] $\Span{\Gamma}=\Sec_h(\Gamma)$ and $\Sec_i(\Gamma)\neq
      \Span{\Gamma}$ for $i<h$ \cite[Proposition 3.9]{CC10},
        \item[c)] for $z\in\Span{\Gamma}$ general
          $\pi^X_h((\pi^X_h)^{-1}(z))\subset\Span{\Gamma}$,
          \cite[Proposition 3.9]{CC10},
            \item[d)] if we are in type I $\gamma_h>\gamma_{h-1}$,
              \cite[Lemma 3.5]{BBC}
                \item[e)] if  $\gamma_h=\gamma_{h+1}$  and $\Sec_{h+1}(X)$ is
                  not defective and does not fill up $\p^N$ we are in
                  type II,  the irreducible components of both contact
                  loci are linearly
                  independent linear spaces, \cite[Lemma
                  3.5]{BBC},
                  \item[f)] if we are in type I and   $\Sec_{h+1}(X)$ is
                  not defective and does not fill up $\p^N$ then
                  $\Gamma_{h+1}$ is of type I.
  \end{itemize}
\end{thm}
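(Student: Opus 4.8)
The plan is a short argument by contradiction that pits the irreducibility of $\Gamma_h$ against the shape of a type~II contact locus described in Theorem~\ref{thm:proprieta_contact_locus}(a), now applied to the index $h+1$.

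First I would choose a general set $A'=\{x_1,\dots,x_{h+1}\}$ of $h+1$ points so that $A=\{x_1,\dots,x_h\}$ is general as well, whence $\Gamma_h=\Gamma(A)$ and $\Gamma_{h+1}=\Gamma(A')$. Since $A\subset A'$ gives $M_A\subseteq M_{A'}$, one has $\Gamma_h\subseteq\Gamma_{h+1}$; alternatively, being in type~I, Theorem~\ref{thm:proprieta_contact_locus}(d) yields $\gamma_h>\gamma_{h-1}\ge 0$, so $X$ is $h$-twd and the nesting recalled in Section~\ref{sec:notation} applies. Next I would verify that part~(a) is available for $\Gamma_{h+1}$: its hypothesis at the index $h+1$ is $\Sec_h(X)\subsetneq\p^N$, which holds because $\Sec_{h+1}(X)\subsetneq\p^N$ by assumption. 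Thus $\Gamma_{h+1}$ is either irreducible (type~I) or has exactly $h+1$ irreducible components, each containing a single point of $A'$.

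Now suppose, for contradiction, that $\Gamma_{h+1}$ is of type~II. As $\Gamma_h$ is irreducible (we are in type~I) and contained in $\Gamma_{h+1}$, it lies inside a single irreducible component $Z$ of $\Gamma_{h+1}$; but $Z$ contains at most one point of $A'$, whereas every point of $A\subset A'$ belongs to $\Gamma_h$, hence to $Z$. Since $\lvert A\rvert=h\ge 2$, this is impossible, so $\Gamma_{h+1}$ is irreducible, i.e.\ of type~I.

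The argument is essentially formal, so I do not expect a genuine obstacle; the two points deserving attention are purely bookkeeping. One is confirming that the hypothesis of part~(a) transfers from index $h$ to index $h+1$ --- only $\Sec_h(X)\subsetneq\p^N$ is used, that is, only the ``does not fill up $\p^N$'' half of the assumption on $\Sec_{h+1}(X)$, with the non-defectivity of $\Sec_{h+1}(X)$ playing no role in this particular implication. The other is making sure that $\Gamma_h$ and $\Gamma_{h+1}$ may be computed on a nested pair $A\subset A'$ of general sets, so that the inclusion $\Gamma_h\subseteq\Gamma_{h+1}$ holds on the nose; this is standard, but it is the only spot where one must be slightly careful. (Implicitly $h\ge 2$ throughout, the type~I/type~II dichotomy being empty for a single point.)
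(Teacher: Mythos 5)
Your proposal is correct and follows essentially the same route as the paper: the paper also takes nested general sets $A\subset B=A\cup\{x_{h+1}\}$, uses $\Gamma(A)\subseteq\Gamma(B)$, and derives a contradiction from the fact that in type~II the component of $\Gamma(B)$ through $x_1$ cannot contain $x_2$, hence cannot contain the irreducible $\Gamma(A)$. Your extra bookkeeping (that only $\Sec_h(X)\subsetneq\p^N$ is needed to invoke part~(a) at level $h+1$, and the implicit $h\ge 2$) is accurate and consistent with the paper's argument.
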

\begin{proof} Points a)--e) are proved in the cited papers under the
  assumption that $\Sec_h(X)\subsetneq\p^N$. Points a)--d)  are immediate when
  $\Sec_h(X)=\p^N$ and $\Sec_{h-1}(X)\subsetneq\p^N$.
  
  We have only to prove point f). Let $A=\{x_1,\ldots,x_h\}$ and
  $B=A\cup\{x_{h+1}\}$ be general sets in $X$. Assume that $\Gamma(B)$
  is of type II. By definition $\Gamma(A)\subset\Gamma(B)$, on the
  other hand  by point a) the irreducible component of  $\Gamma(B)$
  through $x_1$ does not contain $x_2$ and therefore it cannot contain
  $\Gamma(A)$. This contradiction proves the claim. 
\end{proof}

 From the point of view of  identifiability the notions of weakly
 defectiveness  and twd  behave the same.
The following proposition is well known to the experts in the
field but we were not able to found a written version of it.
\begin{pro}\cite{Cpc} Let $X\subset\p^N$ be an irreducible, reduced, and
  non degenerate variety. Assume that $X$ is not $h$-twd, then $X$ is $h$-identifiable.
  \label{pro:non_twd_implica_ident} 
\end{pro}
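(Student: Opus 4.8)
The plan is to read off the statement directly from the structural properties of the tangential contact locus collected in Theorem~\ref{thm:proprieta_contact_locus}. Set $n=\dim X$; we work in characteristic $0$, we may assume $n\geq 1$, and since $\pi^X_1$ is the identity of $\sec_1(X)=X$ we may assume $h\geq 2$. The first step is to notice that the hypothesis already controls the relevant secant varieties: if $\Sec_h(X)=\p^N$, then by Terracini Lemma $M_A=\T_z\Sec_h(X)=\p^N$ for a general set $A$ of $h$ points and a general $z\in\Span A$, hence $\T_yX\subseteq M_A$ for every $y\in X$, the contact locus of $M_A$ is all of $X$, and $\gamma_h=n>0$, contradicting that $X$ is not $h$-twd; the same argument with $h-1$ in place of $h$ shows $\Sec_{h-1}(X)\neq\p^N$. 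So both $\Sec_{h-1}(X)$ and $\Sec_h(X)$ are proper subvarieties, and Theorem~\ref{thm:proprieta_contact_locus}, with all of its parts, applies to the contact locus $\Gamma=\Gamma(A)$ of a general $A=\{x_1,\dots,x_h\}\subset X$.

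Next I would pin down the shape of $\Gamma$. Each $x_i$ lies in $\Gamma$, since $\T_{x_i}X\subseteq M_A$. As $X$ is not $h$-twd we have $\gamma_h=\dim\Gamma=0$, so by part a) $\Gamma$ is a reduced scheme of at most $h$ points: type~I is impossible, because an irreducible $0$-dimensional $\Gamma$ is a single point while $\Gamma$ contains the $h\geq 2$ distinct points of $A$; and in type~II there are exactly $h$ components, each a single point of $A$. Either way $\Gamma=A=\{x_1,\dots,x_h\}$. Part b) then gives $\Span A=\Sec_h(\Gamma)$ and $\Sec_{h-1}(\Gamma)\subsetneq\Span A$, and for a finite set the latter inclusion forces $x_1,\dots,x_h$ to be linearly independent, so $\Span A$ is a $\p^{h-1}$.

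Now fix a general point $z\in\Span A$. By part c), every way of writing $z$ as a combination of $h$ points of $X$ is supported on the contact locus $\Gamma=A$; since $A$ has exactly $h$ elements, the only such decomposition is $z=\sum_i c_ix_i$, so $(\pi^X_h)^{-1}(z)$ is the single point $(A,z)$. Moreover $z$ has rank exactly $h$: its rank is at most $h$ since $z\in\Span A$, and by b) a general point of $\Span A=\Sec_h(\Gamma)$ avoids $\Sec_{h-1}(\Gamma)$, so if $z$ admitted a decomposition into $h'<h$ points of $X$, completing it to $h$ distinct points of $X$ (possible as $\dim X\geq1$) would produce a point of $(\pi^X_h)^{-1}(z)$ and, by c) again, place those $h'$ points on $\Gamma$, giving $z\in\Sec_{h'}(\Gamma)\subseteq\Sec_{h-1}(\Gamma)$, a contradiction. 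Finally, the first projection $\sec_h(X)\to X^{(h)}$ is dominant, so the generic point of the irreducible variety $\sec_h(X)$ is $(A,z)$ with $A$ general in $X^{(h)}$ and $z$ general in $\Span A$, and it maps under $\pi^X_h$ to the generic point of $\Sec_h(X)$. Hence the general point of $\Sec_h(X)$ is $h$-identifiable, i.e.\ $\pi^X_h$ is birational and $X$ is $h$-identifiable.

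The whole argument is short once Theorem~\ref{thm:proprieta_contact_locus} is available, and essentially all of its substance is used through part c): the fact that the decompositions of a general point of $\Span\Gamma$ cannot leave $\Gamma$ is exactly what makes contact loci useful, and is the nontrivial input borrowed from \cite{CC10}. The remaining work is genericity bookkeeping plus the disposal of the degenerate cases $h=1$ and $\Sec_h(X)=\p^N$; I expect the only point needing care is to check, as above, that each decomposition of the chosen general $z$, and each completion of a too-short decomposition of $z$, really is a point of the fiber $(\pi^X_h)^{-1}(z)$ to which part c) may be applied.
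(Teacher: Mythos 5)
Your argument is correct, but it is not the route the paper takes. The paper proves the contrapositive directly and almost from scratch: if a general $z\in\Span{x_1,\ldots,x_h}$ admits a second decomposition $\{y_1,\ldots,y_h\}$, then Terracini forces $\T_{y_j}X\subseteq M_A=\T_z\Sec_h(X)$, and letting $z$ sweep the $(h-1)$-plane $\Span{A}$ produces a positive-dimensional tangency locus, i.e.\ $X$ is $h$-twd; the only input is Terracini Lemma. You instead run the implication forwards, feeding $\gamma_h=0$ into the structure theorem for contact loci (Theorem~\ref{thm:proprieta_contact_locus}, i.e.\ \cite[Proposition 3.9]{CC10}): parts a) and b) pin down $\Gamma=A$ as $h$ independent points, and part c) --- read in its intended form, namely that every decomposition of a general $z\in\Span{\Gamma}$ is supported on $\Gamma$ (the displayed formula $\pi^X_h((\pi^X_h)^{-1}(z))\subset\Span{\Gamma}$ is literally vacuous and is surely a misprint for the projection to $X^{(h)}$) --- then forces the fiber of $\pi^X_h$ over $z$ to be the single point $(A,z)$. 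Your version buys a cleaner logical flow and makes explicit the genericity bookkeeping and the degenerate cases ($h=1$, $\Sec_h(X)=\p^N$) that the paper's two-line sketch suppresses, at the cost of invoking the heavier \cite{CC10} machinery; the paper's version is more elementary and self-contained. Two small points to tighten: your claim that $\Sec_{h-1}(X)\neq\p^N$ needs the monotonicity ``not $h$-twd $\Rightarrow$ not $(h-1)$-twd'' (equivalently $\Gamma_{h-1}\subseteq\Gamma_h$), which the paper records in the remark following the definition of twd, so you should cite it rather than say ``the same argument''; and, as you yourself flag, one must check that a general point of the fiber $(\pi^X_h)^{-1}(z)$ is an honest $h$-point decomposition rather than a limit cluster before part c) applies --- this is exactly what the genericity of $z$ in $\Sec_h(X)$ guarantees.
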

\begin{proof} 
  Assume that $X$ is not $h$-identifiable and let $z\in\Sec_h(X)$ be
  a general point. Let $z\in \Span{x_1,\ldots x_h}$, for $x_i$ general
in $X$. The existence of a different decomposition yields a new set
  $\{y_1,\ldots y_h\}\subset X$ such that $z\in\Span{y_1,\ldots,
    y_h}$. Moving the point $z$ in the linear space $\Span{x_1,\ldots,
    x_h}$ yields a positive dimensional contact locus. 
\end{proof}

\begin{rmk} We want to stress that $h$-identifiability is not
  equivalent to non $h$-twd.
 In \cite{COV2} and \cite{BV} are described examples of Segre and
 Grassmannian varieties that
 are $h$-identifiable but $h$-twd. 
\end{rmk}

We aim to study the relation between twd and defectivity. The next
lemma is a first step in this direction.

\begin{lem} Let $X\subset\p^N$ be an irreducible, reduced, and non degenerate variety of
  dimension $n$,
  $$\pi_k^X:\sec_k(X)\to\p^N$$
  the $k$-secant  map,
  $\tau^X_{k-1}:X\dasharrow \p^M$ the $(k-1)$-tangential projection, and
  $\Gamma:=\Gamma(x_1,\ldots,x_k)$ the $k$-contact locus associated to the
  genral points  $x_1,\ldots, x_k$.
  \begin{itemize}
  \item[i)] The map $\pi^X_k$ is of fiber type if and only if $\tau^X_{k-1}$ is of
    fiber type.
  \item[ii)] Let $\{x_1,\ldots,x_k,y_1,y_2\}$ be general points. Then
    $$\dim (\Gamma(x_1,\ldots,x_k, y_1)\cap
    \Gamma(x_1,\ldots,x_k,y_2))>0$$
    in a neighborhood of $x_i$ only if either $X$ is $k$-twd or
$\pi^X_{k+2}$ has positive dimensional fibers.
\item[iii)] The map $(\tau^X_{k-1})_{|\Gamma}:\Gamma\dasharrow\p^{\gamma_k}$ is
  either of fiber type or dominant.
  \end{itemize}
  \label{lem;easy_rem}
\end{lem}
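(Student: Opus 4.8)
The plan is to prove the three items of Lemma~\ref{lem;easy_rem} essentially by unwinding Terracini's Lemma and the definitions of contact locus and tangential projection, reducing statements about fibers of secant maps to statements about fibers of the tangential projection restricted to the contact locus.

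\textbf{Item i).} First I would invoke Terracini's Lemma (Theorem~\ref{th:Terracini}): for general $x_1,\ldots,x_{k-1}\in X$ and a general $z\in\Span{x_1,\ldots,x_{k-1}}$, the tangent space $\T_z\Sec_{k-1}(X)$ equals $M_A=\Span{\T_{x_1}X,\ldots,\T_{x_{k-1}}X}$, which is exactly the center of $\tau^X_{k-1}$. The idea is then that a general fiber of $\pi^X_k$ over a general point $p=z+x_k$ corresponds, after projecting away from $\T_z\Sec_{k-1}(X)$, to the set of points on $\tau^X_{k-1}(X)$ lying on the line through $\tau^X_{k-1}(x_k)$; more precisely, $\pi^X_k$ has positive dimensional general fiber iff, fixing the first $k-1$ points generically, the last point $x_k$ can be moved in a positive dimensional family without changing $\Span{x_1,\ldots,x_k}\pmod{M_A}$, which is precisely the statement that $\tau^X_{k-1}$ contracts a positive dimensional family through a general point, i.e. is of fiber type. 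I would make this into a clean ``if and only if'' by using that $\dim\sec_k(X)=\dim\sec_{k-1}(X)+n+1$, so defectivity/fiber type of $\pi^X_k$ is measured exactly by the drop of dimension caused by the last tangential projection.

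\textbf{Item iii).} This is the easiest: $\Gamma$ is irreducible (type I) or each component is dealt with separately, and $(\tau^X_{k-1})_{|\Gamma}$ is a rational map from an irreducible variety to $\p^{\gamma_k}$. Note that by Theorem~\ref{thm:proprieta_contact_locus}b) we have $\Span{\Gamma}=\Sec_k(\Gamma)$, and $\tau^X_{k-1}$ restricted to $\Span{\Gamma}$ is the projection from $M_A\cap\Span{\Gamma}$, which one computes to have the right codimension so that the target is a $\p^{\gamma_k}$; then a dominant-or-fiber-type dichotomy for a rational map between irreducible varieties of the same dimension is automatic (a rational map of irreducible varieties is either dominant or has image of strictly smaller dimension, hence positive dimensional general fiber).

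\textbf{Item ii).} This is the one I expect to be the main obstacle. Here the strategy is: suppose the two contact loci $\Gamma(x_1,\ldots,x_k,y_1)$ and $\Gamma(x_1,\ldots,x_k,y_2)$ share a positive dimensional component $Z$ near $x_i$, and $X$ is \emph{not} $k$-twd, so $\Gamma_k$ is a finite set of points. The contact locus $\Gamma(x_1,\ldots,x_k,y_j)$ is cut out by the tangency condition of $M_{\{x_1,\ldots,x_k,y_j\}}=\Span{M_A,\T_{y_j}X}$; the extra dimension of $Z$ over $\Gamma_k$ must come from the bonus point $y_j$, and the fact that the \emph{same} $Z$ appears for two general independent choices $y_1,y_2$ forces $M_A$ together with $Z$ to already produce a tangency excess, which one parlays into either $X$ being $k$-twd (if the excess is ``built in'' to the $k$ points) or into an honest positive dimensional fiber of $\pi^X_{k+2}$: the locus $Z$ plus the two points $y_1,y_2$ varying gives, via Terracini applied at $k+2$ points and item i), a tangential projection $\tau^X_{k+1}$ of fiber type. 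The delicate part is bookkeeping the generality assumptions (which points are held fixed, which are allowed to move, and ensuring the components ``through $x_i$'' are genuinely the relevant ones so Theorem~\ref{thm:proprieta_contact_locus}a) applies), and making sure the dimension count $\dim\sec_{k+2}(X)$ versus $\dim\Sec_{k+2}(X)$ correctly records the contribution of the shared locus $Z$; I would handle this by an explicit incidence-correspondence argument between the relevant Grassmannian of spans and $X^{(k+2)}$, exactly the construction the introduction alludes to.
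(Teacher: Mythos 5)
Your items i) and iii) are essentially the paper's own arguments. For i) the clean formulation is: by Terracini, $\pi_k^X$ is of fiber type exactly when $\T_z\Sec_{k-1}(X)\cap\T_yX\neq\emptyset$ for $y\in X$ general, and this is precisely the fiber-type condition for the projection from $M_A=\T_z\Sec_{k-1}(X)$ (your description of the fiber as ``moving the last point while fixing the first $k-1$'' is not literally the general fiber of $\pi_k^X$, but the dimension count you fall back on is the correct one). For iii) the dichotomy via $\Span{\Gamma}=\Sec_k(\Gamma)$ from Theorem~\ref{thm:proprieta_contact_locus}\,b) is exactly what the paper does.

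Item ii) is where you stop short of a proof. You correctly identify the target (the shared positive-dimensional locus should force a fiber-type tangential projection at level $k+1$), but the step that actually achieves this is never carried out: you defer it to an unspecified ``incidence-correspondence argument between the Grassmannian of spans and $X^{(k+2)}$''. No such machinery is needed; the missing step is a short piece of linear algebra. Set $M_{A_i}=\Span{\T_{x_1}X,\ldots,\T_{x_k}X,\T_{y_i}X}$. Each $M_{A_i}$ is tangent to $X$ along $\Gamma(x_1,\ldots,x_k,y_i)$, so along the common positive-dimensional piece near $x_i$ the tangent spaces of $X$ lie in $M_{A_1}\cap M_{A_2}$; since $X$ is not $k$-twd, $M_A=\Span{\T_{x_1}X,\ldots,\T_{x_k}X}$ is tangent only along a finite set, hence $M_{A_1}\cap M_{A_2}\supsetneq M_A$. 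Pick $v\in (M_{A_1}\cap M_{A_2})\setminus M_A$: since $v\in M_{A_2}=\Span{M_A,\T_{y_2}X}$ and $v\notin M_A$, the space $\Span{M_A,v}\subseteq M_{A_1}$ meets $\T_{y_2}X$, i.e. $\Span{\T_{x_1}X,\ldots,\T_{x_k}X,\T_{y_1}X}\cap\T_{y_2}X\neq\emptyset$. By the generality of the $k+2$ points and item i), this is exactly the statement that $\pi_{k+2}^X$ is of fiber type. Without this deduction your item ii) restates the claim rather than proving it, so you should supply it explicitly.
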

\begin{proof} i) By Terracini Lemma $\pi_k^X$ is
  of fiber type if and only if
  $$\T_z\Sec_{k-1}(X)\cap
    \T_yX\neq\emptyset$$ for $y\in X$ general. This condition is clearly
    equivalent to have $\tau^X_{k-1}$  of fiber type.

ii) Assume that $X$ is not $k$-twd and  $\dim (\Gamma(x_1,\ldots,x_k, y_1)\cap
\Gamma(x_1,\ldots,x_k,y_2))>0$ in a neighborhood of $x_i$. Set
$$M_{A_i}=\langle \T_{x_1}X,\ldots,\T_{x_k}X,\T_{y_i}X\rangle,$$
the variety $X$ is not $k$-twd therefore 
$$M_{A_1}\cap M_{A_2}\supsetneq \langle \T_{x_1}X,\ldots,\T_{x_k}X\rangle. $$
In particular we have
$$(M_{A_1}\cap M_{A_2})\cap \T_{y_i}X\neq\emptyset,$$
and hence
$$\langle \T_{x_1}X,\ldots,\T_{x_k}X,\T_{y_1}X\rangle\cap \T_{y_2}X\neq\emptyset.$$
This shows, by the generality of the points and point i) 
that $\pi^X_{k+2}$ is of fiber type.

iii) Assume that $(\tau_{k-1}^X)_{|\Gamma}$ is not of fiber type. Then
by point b) of Theorem~\ref{thm:proprieta_contact_locus} we have $\dim
\Span{\Gamma}=k(\gamma_k+1)-1$. Hence
$(\tau_{k-1}^X)_{|\Gamma}=\tau_{k-1}^\Gamma$ and both maps are
dominant onto $\p^{\gamma_k}$.
\end{proof}

Next we prove a general statement for type II contact loci.
\begin{lem}\label{lem:clII_ok}  Let $X\subset\p^N$ be an irreducible,
  reduced, and non degenerate variety. Assume that:
  \begin{itemize}
  \item[a)]   $X$ is $k$-twd,
  \item[b)] $X$ is not $(k-1)$-twd
    \item[c)] the $k$-contact locus is of type II.
  \end{itemize} 
 Then $\pi_{k+1}^X$ is of fiber type.
\end{lem}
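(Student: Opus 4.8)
The goal is to show that under hypotheses (a)--(c) the map $\pi^X_{k+1}$ is of fiber type, and the natural strategy is to exploit the very rigid structure that a type II $k$-contact locus forces on us. Write $\Gamma = \Gamma(A)$ for $A = \{x_1,\dots,x_k\}$ general. By Theorem~\ref{thm:proprieta_contact_locus}(a) and hypothesis (c), $\Gamma = \Gamma_1 \cup \dots \cup \Gamma_k$ with $x_i \in \Gamma_i$, each $\Gamma_i$ of dimension $\gamma_k > 0$ (using (a)). The plan is: first pin down that the $\Gamma_i$ are \emph{linear spaces}, linearly independent; then add a general extra point $x_{k+1}$ and analyze $\Gamma(B)$ for $B = A \cup \{x_{k+1}\}$; finally deduce that the $(k+1)$-tangential projection restricted appropriately is of fiber type, and conclude via Lemma~\ref{lem;easy_rem}(i) that $\pi^X_{k+1}$ is of fiber type.

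\textbf{Step 1: linearity of the components.} I would argue that each $\Gamma_i$ is a linear subspace. The point is that $M_A$ is tangent to $X$ along all of $\Gamma$, hence $M_A \supseteq \Span{\T_y X}$ for every $y \in \Gamma_i$. Since $X$ is not $(k-1)$-twd (hypothesis (b)), the tangential projection $\tau^X_{k-1}$ cannot contract any curve through a general point — more precisely, the $(k-1)$-contact locus is zero-dimensional — so the only way $\Gamma_i$ can be positive-dimensional is if its general point behaves like the center of projection, i.e.\ $\Gamma_i$ is contracted by $\tau^X_k$. A contracted subvariety of $X$ under projection from a linear space $M_A$ that is tangent to it along $\Gamma_i$ is forced to be linear: if $y, y' \in \Gamma_i$ then the line $\Span{y,y'}$ meets $M_A$, and running this over $\Gamma_i$ shows $\Span{\Gamma_i} \subseteq$ a linear space contracted to a point, forcing $\Gamma_i = \Span{\Gamma_i}$. (This is exactly the mechanism in the proof of Theorem~\ref{thm:proprieta_contact_locus}(e), which I may simply quote if I am allowed to assume $\Sec_{k+1}(X)$ is non-defective — but since I am \emph{not} given that here, I need to re-derive linearity from the type II + non-$(k-1)$-twd hypotheses directly; this is the step I expect to be the main obstacle.) Granting linearity, Theorem~\ref{thm:proprieta_contact_locus}(b) gives $\Span{\Gamma} = \Sec_k(\Gamma)$ has dimension $k(\gamma_k+1) - 1$ and the $\Gamma_i$ are linearly independent.

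\textbf{Step 2: the extra point and the fiber.} Now take $x_{k+1} \in X$ general and set $A' = A \cup \{x_{k+1}\}$. Since $\Gamma(A') \supseteq \Gamma(A)$ (as $M_{A'} \supseteq M_A$), the component of $\Gamma(A')$ through $x_1$ contains $\Gamma_1$, a positive-dimensional linear space. Consider a general point $z \in \Span{x_1,\dots,x_{k+1}}$, which is a general point of $\Sec_{k+1}(X)$. Using the linearity and linear independence of $\Gamma_1,\dots,\Gamma_k$ from Step 1, together with $\dim \Gamma_i = \gamma_k \geq 1$, I can produce a genuinely moving decomposition of $z$: replace $x_i$ by a nearby point $x_i' \in \Gamma_i$ for each $i \leq k$. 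Because $z \in \Span{\Gamma_1,\dots,\Gamma_k,x_{k+1}}$ and each $\Gamma_i$ is a linear space of positive dimension inside $X$, the span $\Span{\Gamma_1,\dots,\Gamma_k}$ is swept out by the spans $\Span{x_1',\dots,x_k'}$ with $x_i' \in \Gamma_i$, and a dimension count shows the locus of such $(k+1)$-tuples whose span contains $z$ is positive-dimensional. Hence the fiber $(\pi^X_{k+1})^{-1}(z)$ is positive-dimensional, i.e.\ $\pi^X_{k+1}$ is of fiber type.

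\textbf{Alternative conclusion via Lemma~\ref{lem;easy_rem}.} If the explicit sweeping-out argument in Step 2 gets delicate (one must be careful that the moved points stay general enough for Terracini and that the span genuinely varies), I would instead invoke Lemma~\ref{lem;easy_rem}(i): it suffices to show $\tau^X_k$ is of fiber type, i.e.\ $\T_z \Sec_k(X) \cap \T_y X \neq \emptyset$ for $y \in X$ general. Take $y = x_{k+1}$ general; then by linearity $M_A = \Span{\Gamma} $ and since $\Gamma(A \cup \{x_{k+1}\}) \supseteq \Gamma_1 \cup \dots \cup \Gamma_k$ with each $\Gamma_i$ positive-dimensional and contracted, the tangent space $\T_{x_{k+1}} X$ must meet $M_A$ — otherwise $x_{k+1}$ together with the $k$ components would give $k+1$ linearly independent tangent spaces contradicting that $\Gamma(A\cup\{x_{k+1}\})$ is forced (by Theorem~\ref{thm:proprieta_contact_locus}(a) applied at level $k+1$, combined with type II persistence) to contain the positive-dimensional $\Gamma_i$ glued to a zero-dimensional piece at $x_{k+1}$. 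Tracking this carefully shows $\T_{x_{k+1}}X \cap M_A \neq \emptyset$, hence $\tau^X_k$ is of fiber type, hence $\pi^X_{k+1}$ is of fiber type. The main obstacle throughout is Step 1: extracting linearity of the components purely from type II plus non-$(k-1)$-twd without assuming non-defectivity of $\Sec_{k+1}(X)$; everything downstream is a dimension count.
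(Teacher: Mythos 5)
There are two genuine gaps, and each one kills the argument on its own. First, Step 1 is never actually proved, as you yourself flag: the linearity and linear independence of the components of a type II contact locus is \emph{not} a consequence of hypotheses a)--c). In the paper it is point e) of Theorem~\ref{thm:proprieta_contact_locus}, and it requires both $\gamma_k=\gamma_{k+1}$ and the non-defectivity of $\Sec_{k+1}(X)$ --- exactly the datum you do not have here (indeed the lemma is meant to \emph{produce} information about $\pi^X_{k+1}$, so you cannot feed assumptions about $\Sec_{k+1}(X)$ into it). Your sketch is moreover circular: you assume $\Gamma_i$ is contracted by $\tau^X_k$, which by Lemma~\ref{lem;easy_rem}(i) is essentially the conclusion to be proved; and even granting contraction, a subvariety contracted by a linear projection lies in a linear space but need not be one. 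Second, and more seriously, Step 2 would fail even if Step 1 were true: if $\Gamma_1,\dots,\Gamma_k$ are \emph{linearly independent} linear spaces, then a general point of $\Span{\Gamma_1,\dots,\Gamma_k}$ has a \emph{unique} expression as a combination of one point from each $\Gamma_i$ (your own dimension count gives $k-1+k\gamma_k=\dim\Span{\Gamma}$, i.e.\ a generically finite, in fact birational, parametrization). This is precisely the content of Theorem~\ref{thm:proprieta_contact_locus}(b): $\Span{\Gamma}=\Sec_k(\Gamma)$ and not $\Sec_i(\Gamma)$ for $i<k$. So "replacing $x_i$ by nearby $x_i'\in\Gamma_i$" produces no positive-dimensional fiber; a positive-dimensional contact locus does not by itself yield extra decompositions, which is exactly why this lemma is nontrivial. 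The alternative conclusion suffers from the same problem: there is no contradiction in having $k+1$ linearly independent tangent spaces each with a positive-dimensional contact component, so $\T_{x_{k+1}}X\cap M_A\neq\emptyset$ does not follow.

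The missing idea is the one the paper actually uses: reduce to $k=2$ by projecting from $k-2$ tangent spaces, and then prove directly that $\T_yX$ meets $\Span{\T_{x_1}X,\T_{x_2}X}$ for $y\in X$ general. This is done by a propagation argument: one introduces the auxiliary contact loci $\Gamma^1_p$ (for $\Span{\T_{x_1}X,\T_pX}$) and $\Gamma^2_z$, the limit tangent spaces $A_w\subseteq\T_wX$ and their spans $\T(\Gamma^i_\cdot)$, and chains codimension estimates of the form $\codim(\T(\Gamma)\cap\Span{\T_{x_1}X,\T_{x_2}X})\leq n$ along a sweep whose general member passes through a general point of $X$. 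The type II hypothesis enters only at one point, to guarantee that the point $z\in\Gamma^1_y$ with $A_z\cap\T_{x_1}X\neq\emptyset$ is distinct from $x_1$. None of this machinery is present in your proposal.
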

\begin{proof} By point i) in Lemma \ref{lem;easy_rem} it is enough to
  prove that $\tau_k^X$ is of fiber type. Then by projection it is
  enough to prove the latter  for $k=2$.
  Let $\{x_1,x_2,y\}\subset X$ be a set of general points and
  $\Gamma=\Gamma(x_1,x_2,y)$ the contact locus associated
  to
  $\{x_1,x_2,y\}$. To conclude the proof it is enough to
  prove that $\Span{\T_{x_1},\T_{x_2}}\cap\T_xX\neq\emptyset$,
  for $x\in \Gamma$ a general point.
  
  For a general point $p\in \Gamma$ we set
  $$\Gamma^i_p\subset\Gamma(x_i,p)$$ the irreducible component of the
  contact locus $\Gamma(x_i,p)$ through
  $p$.  The contact locus is of type II, therefore $\Gamma^i_p\not\ni
  x_1, x_2$.
  Note that for a general point $x\in \Gamma^1_p$ we have
  $\T_xX\subset\Span{\T_{x_1}X,\T_pX}$. Then by
  semicontinuity for any point $w\in \Gamma^1_p$ there is a linear space of
  dimension  $n$, say $A_w\subseteq\T_wX$, contained in the span.
  
Set
  $$\T(\Gamma^1_p)=\Span{A_w}_{w\in\Gamma^1_p}.$$
  We may assume that $X$ is not $2$-defective, otherwise there is
  nothing to prove,  that is
  \begin{equation}
    \label{eq:2-def}
  \T_{x_1}X\cap\T_{x_2}X=\emptyset, 
  \end{equation}
  and, since $y$ is general,
  \begin{equation}
    \label{eq:n+1usual}
     \codim_{\T(\Gamma^1_y)}(\T(\Gamma^1_y)\cap\T_{x_1}X)=n+1.
  \end{equation}
  The variety $X$ is not $1$-twd, then there are points $z\in\Gamma^1_y$ with $A_z\cap\T_{x_1}X\neq\emptyset$.
  Let $z\in \Gamma^1_y$ be a point with
  \begin{equation}
    \label{eq:zandy}
     A_z\cap\T_{x_1}X\neq\emptyset,
   \end{equation}
   The contact locus is of type II, therefore $z\neq x_1$. We want to
   stress that this is  the only point in the  proof where we use the
   assumption that $\Gamma$ is of type II.

   If $A_z\cap\T_{x_2}X\neq \emptyset$, by Equations~(\ref{eq:2-def})~and~(\ref{eq:n+1usual}) we have
     $$\codim_{\T(\Gamma^1_y)}(\T(\Gamma^1_y)\cap\Span{\T_{x_1},\T_{x_2}})\leq
     n$$
     and we conclude
    $\T_yX\cap\Span{\T_{x_1},\T_{x_2}}\neq\emptyset$, that is  $\tau_2^X$ is of fiber type.

 Assume that  $A_z\cap\T_{x_2}X=\emptyset$. Then we
    consider the span $\Span{A_z,\T_{x_2}}$. By semicontinuity to this linear space is
    associated a contact locus  and we set  $\Gamma_z^2$ its
    irreducible component passing
through $z$. As before we have
$$\codim_{\T(\Gamma_z^2)}(\T(\Gamma_z^2)\cap\T_{x_2})= n+1,$$
and by Equations~(\ref{eq:2-def})~and~(\ref{eq:zandy}) we conclude that
$$\codim_{\T(\Gamma_z^2)}(\T(\Gamma_z^2)\cap\Span{\T_{x_1},\T_{x_2}})\leq
n.$$

This yields
\begin{equation}
  \label{eq:span_x_1x_2}
   A_w\cap  \Span{\T_{x_1},\T_{x_{2}}}\neq \emptyset,
\end{equation}
  for any point $w\in\Gamma_z^2$. We have $z\neq y_1$, then  the general choice of the points
  $x_i$,  and the assumption that $X$ is not $2$-defective ensure that 
  \begin{equation}
    \label{eq:no_x_1}
      A_w\cap\T_{x_1}X=\emptyset
    \end{equation}
 for general
 $w\in\Gamma_z^{2}$.


 We set $\Gamma_w^1$ the irreducible
 component through $w$ of the contact locus associated to $\Span{A_w,\T_{x_1}X}$.
 Again $z\neq x_1$ and the general choice of the $x_i$  ensure that 
 $z\not\in\Gamma_w^1$. In particular
 $$\Gamma_w^1\neq\Gamma_z^2.$$
Set
      $$S^2:=\bigcup_{v\in\Gamma^1_y
          \mbox{general}}\Gamma_v^2.$$
      Then $\Gamma_z^2$ is in the closure of $S^2$ and, for
      $p\in\Gamma_y^1$ general, $\Gamma_y^1$ is in the closure of
      $$\bigcup_{w\in\Gamma^2_p
        \mbox{general}}\Gamma_w^1.$$
      Hence $\Gamma_y^1$ is in the closure of
      $$S^1:=\bigcup_{w\in\Gamma^2_z
        \mbox{general}}\Gamma_w^1. $$
   In particular the general point
      of $S^1$ is a general point of $X$.  By construction we have
 $$\codim_{\T(\Gamma_w^1)}(
 \T(\Gamma_w^1)\cap\T_{x_1}X)\leq n+1.$$
 Equations~(\ref{eq:span_x_1x_2}) and~(\ref{eq:no_x_1}) then give
$$\codim_{\T(\Gamma_w^1)}(\T(\Gamma_w^1)\cap\Span{\T_{x_1},\T_{x_2})}\leq
  n$$
  and this concludes the proof.
  \end{proof}

We are ready to prove our main result  that connects  twd and defectivity.

\begin{thm} Let $X\subset\p^N$ be an irreducible, reduced, and non degenerate variety of
  dimension $n$. Assume that:
  \begin{itemize}
  \item[a)]   $X$ is $k$-twd,
  \item[b)] $X$ is not $(k-1)$-twd
    \item[c)] $k> n$ and
  $N\geq(k+1)(n+1)-1$.
  \end{itemize} 
 Then $\pi_{k+1}^X$ is of fiber type.
\label{th:ktwd}
\end{thm}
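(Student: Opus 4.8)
The plan is to analyse the $k$-contact locus $\Gamma=\Gamma(x_1,\dots,x_k)$, which by Theorem~\ref{thm:proprieta_contact_locus}~a) is either irreducible (type I) or a union of $k$ irreducible components, one through each $x_i$ (type II). One first observes that the hypotheses suffice to apply Theorem~\ref{thm:proprieta_contact_locus}: since $\dim\sec_k(X)=k(n+1)-1<(k+1)(n+1)-1\le N$ we have $\Sec_k(X)\subsetneq\p^N$, hence also $\Sec_{k-1}(X)\subsetneq\p^N$. Next, the usual join bound $\dim\Sec_{k+1}(X)\le\dim\Sec_k(X)+n+1$ shows that if $\pi^X_k$ were of fiber type then $\dim\Sec_{k+1}(X)\le(k+1)(n+1)-2<(k+1)(n+1)-1\le N$, so $\pi^X_{k+1}$ would be of fiber type and we would be done. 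Hence we may assume $\pi^X_k$ --- and, by Lemma~\ref{lem;easy_rem}~i), also $\tau^X_{k-1}$ --- is generically finite; in particular $\Sec_k(X)$ is not $k$-defective and $\dim M_A=\dim\Sec_k(X)=k(n+1)-1$.

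If $\Gamma$ is of type II the statement is immediate: assumptions a), b) together with ``$\Gamma$ of type II'' are precisely the hypotheses of Lemma~\ref{lem:clII_ok}, which gives that $\pi^X_{k+1}$ is of fiber type.

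Assume then that $\Gamma$ is of type I. Since $\tau^X_{k-1}$ is generically finite so is its restriction to $\Gamma$, so by Lemma~\ref{lem;easy_rem}~iii) that restriction is dominant, it coincides with $\tau^\Gamma_{k-1}:\Gamma\dasharrow\p^{\gamma_k}$, and $\dim\Span{\Gamma}=k(\gamma_k+1)-1$. Moreover $0<\gamma_k<n$: positivity is hypothesis a), while $\gamma_k=n$ would force $\Gamma=X$, hence $X\subseteq M_A$, contradicting non-degeneracy since $\dim M_A=k(n+1)-1<N$. Finally, $M_A$ being tangent to $X$ along $\Gamma$ forces $\Gamma\subseteq M_A$ and therefore $\Span{\Gamma}\subseteq M_A$. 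To summarise, $\Gamma\subset\Span{\Gamma}=\p^{k(\gamma_k+1)-1}$ is a non-degenerate, non-$k$-defective variety of generic rank exactly $k$ (Theorem~\ref{thm:proprieta_contact_locus}~b)), with $\tau^\Gamma_{k-1}$ generically finite, and $k>n>\gamma_k$.

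The heart of the matter --- and the step I expect to be the main obstacle --- is to convert this picture into a positive-dimensional family of decompositions of a general point of $\Sec_{k+1}(X)$. Following the mechanism announced in the Introduction, I would introduce the map
$$\Psi:\Hilb_k(\Gamma)\dasharrow\mathbb{G},\qquad B=\{y_1,\dots,y_k\}\longmapsto M_B:=\Span{\T_{y_1}X,\dots,\T_{y_k}X},$$
from the locus of reduced length-$k$ subschemes of the Hilbert scheme of $\Gamma$ to a suitable Grassmannian $\mathbb{G}$ of linear subspaces of $\p^N$, recording the span $M_B$ of the tangent spaces. The plan is to prove that $\Psi$ is of fiber type, and this is exactly where the two numerical hypotheses are used: one runs a dimension count inside $\Span{\Gamma}=\p^{k(\gamma_k+1)-1}$, exploiting generic finiteness of $\pi^\Gamma_k$ and of $\tau^\Gamma_{k-1}$ to control the dimensions of the family of $(k-1)$-secant planes of $\Gamma$ and of its tangential contact data; the inequality $k>n$ limits how many directions each $\T_{y_i}X$ adds to $\T_{y_i}\Gamma$, while $N\ge(k+1)(n+1)-1$ guarantees that the spaces $M_B$ have the expected dimension $k(n+1)-1$ and cannot vary simply by filling extra room in $\p^N$; comparing the estimates forces the generic fiber of $\Psi$ to be positive-dimensional. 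Granting this, a positive-dimensional fiber $\Psi^{-1}(M_B)$ is a moving family of $k$-subsets of $\Gamma$ with common span of tangent spaces; together with the generality of an extra point $x_{k+1}\in X$ and with $\Span{\Gamma}\subseteq M_A$, this produces, for $p$ general in $\Span{x_1,\dots,x_k,x_{k+1}}$, a positive-dimensional family of $(k+1)$-point decompositions, i.e. $\pi^X_{k+1}$ is of fiber type. Equivalently, the fiber-type property of $\Psi$ translates, via Terracini and Lemma~\ref{lem;easy_rem}~i), into $\T_yX\cap M_A\ne\varnothing$ for general $y\in X$, so that $\tau^X_k$ is of fiber type. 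The two soft pieces, the type II case and the reductions identifying the structure of a type I contact locus, are routine; the fiber-type property of $\Psi$, together with the accompanying dimension estimate, is the substantial point.
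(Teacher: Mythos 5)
Your preliminary reductions match the paper and are sound: the type~II case is indeed disposed of by Lemma~\ref{lem:clII_ok}, the hypothesis $N\geq(k+1)(n+1)-1$ does give $\Sec_k(X)\subsetneq\p^N$, one may assume $\pi^X_k$ generically finite, and $0<\gamma_k<n$ with $\Span{\Gamma}\subseteq M_A$. The problem is that the heart of the argument is announced rather than proved, and the specific map you propose would not support the dimension count you describe. Since $M_A$ is tangent to $X$ along $\Gamma$, one has $\T_yX\subseteq M_A$ for general $y\in\Gamma$, hence $M_B\subseteq M_A$ for every general $B\in\Hilb_k(\Gamma)$; your claim that $M_B$ attains the expected dimension $k(n+1)-1=\dim M_A$ would therefore force $\Psi$ to be \emph{constant}. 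A constant map is of fiber type for free, so neither $k>n$ nor $N\geq(k+1)(n+1)-1$ is actually consumed, and, more seriously, a common tangent span for all $B\subset\Gamma$ does not by itself produce a second decomposition of a general point of $\Span{x_1,\dots,x_{k+1}}$: that passage is exactly the hard step. If instead $M_B\subsetneq M_A$, the target Grassmannian of such subspaces is far too large for a count against $\dim\Hilb_k(\Gamma)=k\gamma_k$ to force positive-dimensional fibers. A further warning sign is that hypothesis b) never enters your type~I argument, whereas it is essential there.

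The paper's construction is genuinely different at precisely this point. Fix a general auxiliary point $y\in X$ and map $Y\in\Hilb_{k-1}(\Gamma)$ (note: $k-1$ points, not $k$) to $\tau_k(\Gamma(Y\cup\{y\}))$, which by Lemma~\ref{lem;easy_rem}~iii) (under the assumption, for contradiction, that $\tau_k$ is generically finite) is a $\gamma$-dimensional linear space through $z=\tau_k(y)$, hence lies in $\T_z\tau_k(X)\cong\p^n$. The target is therefore the small Grassmannian $\mathbb{G}(\gamma-1,n-1)$ of dimension $\gamma(n-\gamma)$, and $k>n$ gives $(k-1)\gamma>\gamma(n-\gamma)$, so the map $\chi$ has positive-dimensional fibers. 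Hypothesis b) then enters through Lemma~\ref{lem;easy_rem}~ii): it forces $\dim(\Gamma\cap\Gamma(Y_i\cup\{y\}))=0$ near the $y_i$, so two general members $Y_1,Y_2$ of a fiber yield \emph{distinct} irreducible contact loci with the same $\gamma$-dimensional image under $\tau_k$; letting $Y$ move in the fiber produces a positive-dimensional family of $\gamma$-dimensional varieties with a common image, i.e.\ a positive-dimensional fiber of $\tau_k$, the desired contradiction. Your proposal correctly identifies the general shape (Hilbert scheme of the contact locus mapped to a Grassmannian, fiber type by a dimension count), but the construction that makes the count close, and the mechanism converting fibers of that map into fibers of $\tau_k$, are missing; as written this is a plan rather than a proof.
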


\begin{proof} Thanks to Lemma~\ref{lem:clII_ok} we may assume that the
  contact locus is of type I.
By hypothesis the variety $X$ is $k$-twd. Let $A=\{x_1,\ldots, x_k\}\subset X$ be a
set of general points and $\Gamma:=\Gamma(A)$ the associated contact
locus of dimension $\gamma>0$.
Let $z\in\Span{A}$ be a general point,
$\tau_k:=\tau^X_k:X\dasharrow\p^M$ the associated 
$k$-tangential projection, and $y\in X$ a general
point. For a general set $Y:=\{y_1,\ldots,
y_{k-1}\}\subset\Gamma$ let $\Gamma(Y\cup\{y\})$ be the contact locus associated
to $\{y_1,\ldots,y_{k},y\}$.

 Assume that $\pi_{k+1}^X$ is not of fiber type. 
then, by i) in Lemma~\ref{lem;easy_rem} $\tau_{k}$ is not of fiber
type and by point iii) in Lemma~\ref{lem;easy_rem},
$\tau_k(\Gamma(Y\cup\{y\})$ is a linear space of dimension
$\gamma$ through $z:=\tau_k(y)$. This gives a map
$$\chi:Hilb_{k-1}(\Gamma)\dasharrow {\mathbb G}(\gamma-1, M-1). $$

The point $z$ is  smooth hence all these linear spaces sit  in
$\T_z\tau_k(X)\cong\p^n$.  In other words we have a map
$$\chi:Hilb_{k-1}(\Gamma)\dasharrow {\mathbb G}(\gamma-1, n-1)\subset{\mathbb G}(\gamma-1, M-1). $$
Note that $\dim {\mathbb G}(\gamma-1, n-1)=\gamma(n-\gamma)$
and $\dim Hilb_{k-1}(\Gamma)=(k-1)\gamma$. By hypothesis $k> n$
and $\gamma>0$
hence we have 
$$(k-1)\gamma> \gamma(n-\gamma).$$
Then the map $\chi$ is of fiber
type and fibers have, at least,  dimension $\gamma(k-n+\gamma-1)$.

Set $[Y_1], [Y_2]\in\chi^{-1}([\Lambda])$ general points, for $[\Lambda]\in
\chi(Hilb_{k-1}(\Gamma))\subset{\mathbb G}(\gamma-1, n-1)$ a general point. The variety $X$ is not
$(k-1)$-twd and we are assuming that $\pi_{k+1}^X$ is not of fiber type
therefore, by ii) in Lemma~\ref{lem;easy_rem},
$$\dim(\Gamma\cap\Gamma(Y_i\cup\{y\}))=0,$$
in a neighborhood of $y_i$. Since the fiber of $\chi$ is positive
dimensional we have
\begin{equation}
  \label{eq:diff_irr}
  \Gamma(Y_1\cup\{y\})\not\supset Y_2.
\end{equation}
The
contact loci are irreducible then, by Equation~(\ref{eq:diff_irr}), we
conclude that 
\begin{equation*}
  \Gamma(Y_1\cup\{y\})\neq
\Gamma(Y_2\cup\{y\}).
\end{equation*}
Therefore, by point iii) in Lemma~\ref{lem;easy_rem}, the positive dimensional fiber of $\chi$ induces a positive dimensional
fiber of $\tau_{k}$ and we derive,   by point i)
Lemma~\ref{lem;easy_rem}, the contradiction that
that $\pi_{k+1}^X$ is of fiber type.
\end{proof}

\begin{rmk}\label{rmk:optimal_c} Both assumption b) and c) alone are
  reasonable and not over-demanding. Unfortunately the combination of
  them is quite restrictive and narrows the range of application we
  are aiming at.
  
  We believe the statement is not optimal with respect to assumption
  c). But we are not sure it is true, in full generality, without any assumption of this
  kind. On the other hand we strongly believe that for many
  interesting varieties, like Segre, Grassmannian, Veronese and their combinations,  
  twd can occur only one step before the secant map becomes of fiber
  type.
  This is not the case for weakly defectiveness as is shown in
  \cite{BV}. In \cite[Theorem 1.1 a)]{BV} it is proven that
  ${\mathbb G}(2,7)$ is 2 and 3 weakly defective without being
  3-defective. Note that this variety is  3-twd and it is not 2-twd.
\end{rmk}



  The next result generalizes the main result in \cite{BBC} and it allows to avoid the bottleneck introduced by conditions b) and
  c)  of Theorem~\ref{th:ktwd}  in  many interesting situations.

  \begin{lem}
  \label{lem:type2} Let $X\subset\p^N$ be an irreducible, reduced, and
  non degenerate variety.
  Assume that $X$ is not $1$-twd and $\pi_{k+1}^X$ is generically
  finite, in particular $X$ is not $(k+1)$-defective. If 
  $X$ is $k$-twd  then  $\gamma_k<\gamma_{k+1}$.
\end{lem}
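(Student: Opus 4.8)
The plan is to argue by contradiction, assuming $X$ is $k$-twd with $\gamma_k=\gamma_{k+1}$, and to split into the two possible types for the $k$-contact locus. First I would dispose of the type II case: if $\Gamma_k$ is of type II, then point e) of Theorem~\ref{thm:proprieta_contact_locus} applies (the hypotheses are met, since $\gamma_k=\gamma_{k+1}$ and $\Sec_{k+1}(X)$ is neither defective nor filling by assumption), so the irreducible components of $\Gamma_k$ are linearly independent linear spaces. But then, arguing as in Lemma~\ref{lem:clII_ok} — which only uses that $X$ is not $1$-twd together with a type II $k$-contact locus — one shows $\pi_{k+1}^X$ is of fiber type, contradicting generic finiteness. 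Actually the cleanest route is: a type II $\Gamma_k$ with $k$-twd and $X$ not $(k-1)$-twd forces $\pi_{k+1}^X$ of fiber type by Lemma~\ref{lem:clII_ok} directly, once one observes that $\gamma_k=\gamma_{k+1}$ combined with $X$ being $k$-twd but not $(k+1)$-defective lets us reduce to the situation where $X$ is not $(k-1)$-twd (if $X$ were $(k-1)$-twd we could pass to a smaller $k$, using $\Gamma_{k-1}\subseteq\Gamma_k$ and equidimensionality, until we reach the first twd level, where the type II components are still independent linear spaces by e) applied at that level).

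Next, for the type I case, I would use point d) of Theorem~\ref{thm:proprieta_contact_locus}: in type I one has $\gamma_h>\gamma_{h-1}$ as soon as $\Sec_{h-1}(X)\subsetneq\p^N$. The idea is to apply this at level $h=k+1$. By point f) of Theorem~\ref{thm:proprieta_contact_locus}, since $\Gamma_k$ is of type I and $\Sec_{k+1}(X)$ is neither defective nor filling, $\Gamma_{k+1}$ is also of type I. Then d) at level $k+1$ gives $\gamma_{k+1}>\gamma_k$, which is exactly the desired conclusion. The only subtlety is checking that the hypothesis $\Sec_k(X)\subsetneq\p^N$ needed to invoke d) holds; this follows because $\pi^X_{k+1}$ is generically finite and $\dim\sec_{k+1}(X)=(k+1)(n+1)-1$, so $\Sec_{k+1}(X)$ has dimension $(k+1)(n+1)-1$, forcing $N\geq(k+1)(n+1)-1$, and in particular $\Sec_k(X)$, of dimension at most $k(n+1)-1<N$, is proper.

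The main obstacle I anticipate is the reduction in the type II case — making sure that the hypothesis "$X$ not $(k-1)$-twd" required by Lemma~\ref{lem:clII_ok} can be arranged, or alternatively adapting the argument of Lemma~\ref{lem:clII_ok} to the weaker hypothesis "$X$ not $1$-twd" that is actually assumed here. One way around this: since $X$ is not $1$-twd but is $k$-twd, let $k_0\le k$ be the smallest index with $X$ being $k_0$-twd; then $X$ is not $(k_0-1)$-twd, the $k_0$-contact locus is of type II (any type I locus at level $k_0$ would have $\gamma_{k_0}>\gamma_{k_0-1}=0$, fine, but then by f) type I persists and we are not in the type II case), and Lemma~\ref{lem:clII_ok} gives $\pi_{k_0+1}^X$, hence $\pi_{k+1}^X$, of fiber type — the contradiction. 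So the type II horn never actually arises under our hypotheses, and the real content is the type I computation via d) and f). With the type II case excluded, the lemma follows. $\qed$
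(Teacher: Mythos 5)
Your argument is essentially correct, but it follows a genuinely different route from the paper's. The paper first reduces, exactly as you do, to the first level where the contact locus dimension stabilizes (so that $\gamma_{k-1}<\gamma_k=\gamma_{k+1}$), but then it invokes point e) of Theorem~\ref{thm:proprieta_contact_locus} to force type II with linearly independent \emph{linear} components $P_i$, and shows by a direct Terracini computation that the components of $\Gamma(x_1,\ldots,x_{k-1},y)$ other than $P_y$ do not move with $y$; this produces a fixed positive-dimensional piece inside $M_{A_1}\cap M_{A_2}$ and hence a fiber-type tangential projection, contradicting generic finiteness. You instead split on the type of $\Gamma_k$: in type I you get the conclusion directly (not by contradiction) from f) and d), and in type II you delegate the contradiction to Lemma~\ref{lem:clII_ok} after passing to the first twd level $k_0$, where the hypothesis ``not $(k_0-1)$-twd'' is automatic. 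Both halves of your plan are sound: the propagation of type I through f), the application of d) at level $k+1$ (which only needs $\Sec_k(X)\subsetneq\p^N$, and you justify this correctly from $\dim\Sec_k(X)\leq k(n+1)-1<(k+1)(n+1)-1\leq N$), and the transfer of fiber-type-ness from $\pi_{k_0+1}^X$ up to $\pi_{k+1}^X$ all work. Your route has the merit of exposing that, under the lemma's hypotheses, a type II contact locus can never occur, which the paper's proof does not make explicit; the paper's route is more self-contained in that it does not rely on Lemma~\ref{lem:clII_ok}.

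One step is misstated and needs a patch: you write that ``$\Sec_{k+1}(X)$ is neither defective nor filling by assumption.'' Non-defectivity does follow from generic finiteness of $\pi_{k+1}^X$, but non-filling is \emph{not} among the hypotheses, and you use it when applying f) at level $k$ (and e) in your first, discarded, version of the type II argument). The fix is short: if $\Sec_{k+1}(X)=\p^N$ then by Terracini $M_B=\p^N$ for a general $(k+1)$-tuple $B$, so $\Gamma_{k+1}=X$ and $\gamma_{k+1}=n$; the hypothesis $\gamma_k=\gamma_{k+1}$ would then give $\Gamma_k=X$, whence $\Sec_k(X)=\Span{\Gamma_k}=\Span{X}=\p^N$ by point b), contradicting $\Sec_k(X)\subsetneq\p^N$. (Alternatively: in that degenerate case $\gamma_{k+1}=n>\gamma_k$ holds outright, since $\gamma_k=n$ is excluded by the same use of b), so the conclusion of the lemma is immediate and you may assume $\Sec_{k+1}(X)\subsetneq\p^N$ for the rest of the argument.) With this observation inserted, your proof closes.
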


\begin{proof}The variety $X$ is  not $1$-twd, then we may assume, without loss of generality,
  that
  $$\gamma_{k-1}<\gamma_k=\gamma_{k+1}.$$
  
  Then $\gamma_{k+1}<n$ and $\Sec_{k+1}(X)\subsetneq\p^N$, hence  
by e) in Theorem~\ref{thm:proprieta_contact_locus}, the contact loci
are of type II and linearly independent linear spaces.
Fix $\{x_1,\ldots,x_{k},y\}\subset X$ a set of general points and
let
$$\Gamma(x_1,\ldots,x_{k},y)=\cup_1^k P_i\cup P_y$$
the contact locus.
Moreover the assumption  $\gamma_k=\gamma_{k+1}$ and point a) in
Theorem~\ref{thm:proprieta_contact_locus}   force
$$\Gamma(x_1,\ldots,x_{k-1},y)=\cup_1^{k-1} P_i\cup P_y,$$
with the same $P_i$'s.
Then
$$\bigcap_{y\in X} \Span{\T_{x_1}X,\ldots,\T_{x_{k-1}}X,\T_yX}\supset
\Span{\T_zX}_{z\in P_i, \mbox{i=1,\ldots,
    k-1}}$$
We are assuming that $\gamma_{k-1}<\gamma_k$ therefore
$$P_i\not\subset\Gamma(x_1,\ldots, x_{k-1}),$$
and we have a proper inclusion
$$ \Span{\T_zX}_{z\in P_i, \mbox{i=1,\ldots,
    k-1}}\supsetneq\langle \T_{x_1}X,\ldots,\T_{x_{k-1}}X\rangle. $$

Set
$$M_{A_i}=\langle \T_{x_1}X,\ldots,\T_{x_{k-1}}X,\T_{y_i}X\rangle,$$
for general points $y_1, y_2\in X$.
Then we have
$$M_{A_1}\cap M_{A_2}\supset\Span{\T_zX}_{z\in P_i, \mbox{i=1,\ldots,
    k-1}}\supsetneq\langle \T_{x_1}X,\ldots,\T_{x_{k-1}}X\rangle. $$
and we conclude that
$$(M_{A_1}\cap M_{A_2})\cap \T_{y_i}\neq\emptyset.$$ 
This shows that
$$\langle \T_{x_1}X,\ldots,\T_{x_{k-1}}X,\T_{y_1}X\rangle\cap \T_{y_2}X\neq\emptyset.$$
hence the $k$-tangential projection $\tau^X_k$ is of fiber type
and by Lemma~\ref{lem;easy_rem} we derive the contradiction that
$\pi_{k+1}^X$ is of fiber type.
\end{proof}
\begin{rmk}
  Let us recall that $1$-twd varieties are classified in \cite{GH} and
  are essentially generalized developable varieties. In particular
  they are ruled by linear spaces and,
  with the unique exception of linear spaces,  they are singular.
\end{rmk}

We are ready to apply the above results to get not tangentially weakly
defectiveness  and hence identifiability  statements.

\begin{cor}
  \label{cor:twd_defect} Let $X\subset\p^N$ be an irreducible,
  reduced, and non degenerate variety that is not $1$-twd, for instance a smooth variety
  or a variety that is not covered by linear spaces.
  Assume that  $\pi^X_k$ is generically finite and 
  $k\geq \dim X$.

  Then $X$
  is not $(k-\dim X)$-twd and  it is not $(k-\dim X+1)$-twd if
  $\pi^X_k$ is not dominant.
If moreover either $k> 2\dim X$ or $\pi_k^X$ is not dominant and $k\geq
2\dim X$ then $X$ is not
$(k-1)$-twd.

In all the above cases $X$ is $h$-identifiable.
\end{cor}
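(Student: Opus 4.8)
The plan is to bootstrap everything from the two technical results already proved, Lemma~\ref{lem:type2} and Theorem~\ref{th:ktwd}, together with the monotonicity $\Gamma_h\subseteq\Gamma_{h+1}$ and the obvious bound $\gamma_h\le n$ (where $n:=\dim X$) coming from $\Gamma_h\subseteq X$. As a preliminary I would record the standard consequence of Terracini's Lemma that if $\pi^X_k$ is generically finite then so is $\pi^X_j$ for every $j\le k$: if $X$ were $j$-defective with $j<k$ then, adding $k-j$ further general tangent spaces, one gets $\dim\Sec_k(X)<k(n+1)-1=\dim\sec_k(X)$, that is $X$ would be $k$-defective, contrary to hypothesis; in particular $N\ge\dim\Sec_k(X)=k(n+1)-1$.

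The core is Step~1: showing $X$ is not $(k-n)$-twd, and not $(k-n+1)$-twd when $\pi^X_k$ is not dominant. Suppose $X$ is $(k-n)$-twd, so $\gamma_{k-n}\ge1$; by monotonicity $\gamma_h\ge1$ and $\pi^X_{h+1}$ is generically finite for $k-n\le h\le k-1$, so Lemma~\ref{lem:type2} applies at each such index and produces a strictly increasing chain $\gamma_{k-n}<\gamma_{k-n+1}<\dots<\gamma_k$ with $n$ strict steps. Hence $\gamma_k\ge\gamma_{k-n}+n\ge n+1$, contradicting $\gamma_k\le n$. If moreover $\pi^X_k$ is not dominant I would run the same chain one step shorter: were $X$ to be $(k-n+1)$-twd, then $\gamma_k\ge\gamma_{k-n+1}+(n-1)\ge n$, forcing $\Gamma_k=X$; since ``$M_A$ tangent to $X$ at $x$'' means $\T_xX\subseteq M_A$, this gives $M_A\supseteq\Span{\T_xX}_{x\in X}=\p^N$ by non-degeneracy, hence (using $M_A=\T_z\Sec_k(X)$ from Terracini) that $\Sec_k(X)=\p^N$, contradicting non-dominance.

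Step~2 upgrades this to ``not $(k-1)$-twd'' under the stronger hypothesis. Assume $X$ is $(k-1)$-twd and let $h_0\le k-1$ be the least index with $X$ $h_0$-twd; then $h_0\ge2$ since $X$ is not $1$-twd, and $X$ is $h_0$-twd but not $(h_0-1)$-twd. By Step~1, $h_0\ge k-n+1$ in general and $h_0\ge k-n+2$ when $\pi^X_k$ is not dominant; in both cases of the hypothesis ($k>2n$, or $\pi^X_k$ not dominant and $k\ge2n$) this yields $h_0\ge n+2>n$. Moreover $h_0+1\le k$ gives $(h_0+1)(n+1)-1\le k(n+1)-1\le N$, so Theorem~\ref{th:ktwd} applies with twd-index $h_0$ and shows $\pi^X_{h_0+1}$ is of fiber type; but $h_0+1\le k$, so $\pi^X_{h_0+1}$ is generically finite, a contradiction. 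Thus $X$ is not $(k-1)$-twd. In every case, Proposition~\ref{pro:non_twd_implica_ident} — applicable since $X$ is irreducible, reduced and non-degenerate — turns ``not $h$-twd'' into ``$h$-identifiable'', giving the stated conclusions.

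The only place genuine care is needed is the bookkeeping in Step~1: getting the chain length right ($n$ versus $n-1$) and, in the non-dominant case, ruling out the possibility $\gamma_k=n$ via $\Sec_k(X)\subsetneq\p^N$, together with the small verification that the bound $N\ge k(n+1)-1$ forced by generic finiteness of $\pi^X_k$ is exactly what Theorem~\ref{th:ktwd} requires at the minimal twd-index $h_0$. The one substantive point to double-check is the chain $\gamma_k=n\Rightarrow\Gamma_k=X\Rightarrow M_A=\p^N$, which rests on the tangency-locus definition and on the tangent spaces of $X$ spanning the ambient space. Everything else is a direct application of results already in hand.
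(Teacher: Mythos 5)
Your proof is correct and follows essentially the same route as the paper's: the first two claims come from the strict growth $\gamma_j<\gamma_{j+1}$ of Lemma~\ref{lem:type2} played against the bound $\gamma\le\dim X$ (sharpened to $\gamma<\dim X$ when $\pi^X_k$ is not dominant), and the last from Theorem~\ref{th:ktwd} applied at a twd index exceeding $\dim X$, with identifiability via Proposition~\ref{pro:non_twd_implica_ident}. Your only (welcome) refinements over the paper's terse write-up are the explicit check of hypothesis c) of Theorem~\ref{th:ktwd} using $N\ge k(n+1)-1$ and the replacement of the ``recursive'' application by a single application at the minimal twd index $h_0$.
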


\begin{proof} By hypothesis $\pi_h$ is generically finite for any
  $h\leq k$. Then by Theorem~\ref{lem:type2} if it is $j$-twd
  $$\gamma_j<\gamma_{j+1}.$$ The contact locus is a subvariety of $X$,
  hence $\gamma_{k-\dim X}=0$. This proves the first statement.

  If $\pi_k^X$ is not dominant then the contact locus is a proper
  subvariety and we have $\gamma_{k-\dim X+1}=0$.

  Assume that  $k\geq 2\dim X$ then by the first part $X$ is not $j$-twd
  for some $j>\dim X$.
  Then we apply
  Theorem~\ref{th:ktwd} recursively to conclude.
  We derive identifiability by Proposition~\ref{pro:non_twd_implica_ident}.
\end{proof}

\begin{rmk}
  The first part of Corollary~\ref{cor:twd_defect} extends the bounds
  in \cite{BBC} to non $1$-twd varieties. The main novelty is
  the second part that allows to derive identifiability from non
  defectivity for large enough secant varieties.
\end{rmk}





\section{Application to tensor and structured tensor spaces}\label{sec:application}

As we already mentioned identifiability is particularly interesting for
tensor spaces. In this section we use our main result to explicitly
state identifiability of a variety of tensor spaces.
For this we will consider Segre, Segre-Veronese and Grassmannian varieties
and their $h$-twd properties.

We start with some notation
\begin{notaz}
  The variety $\Sigma(d_{1},...,d_{r};n_{1},...,n_{r})$ is the Segre-Veronese
embedding of $\mathbb{P}^{n_{1}}\times...\times\mathbb{P}^{n_{r}}$
in $\mathbb{P}^{{\prod}\binom{n_{i}+d_{i}}{n_{i}}-1}$ via
the complete linear system $|\oo(d_1,\ldots,d_r)|$.

When all $d_i$'s are one we have the Segre embedding and we let
 $X_{n_{1},...,n_{r}}:=\Sigma(1,\ldots,1;n_1,\ldots,n_r)$ and
 $X_n^r:=\Sigma(1,\ldots,1;n,\ldots,n)\cong(\p^n)^r$.
 The expected generic rank is
 $$gr(\Sigma(d_{1},...,d_{r};n_{1},...,n_{r}))= \lceil
 \frac{{\prod}\binom{n_{i}+d_{i}}{n_{i}}}{(\sum n_i)+1}\rceil$$

 Using the notations in \cite{AOP} we define $$s(\Sigma(d_{1},...,d_{r};n_{1},...,n_{r}):= \lfloor
 \frac{{\prod}\binom{n_{i}+d_{i}}{n_{i}}}{(\sum n_i)+1}\rfloor.$$ 
 For simplicity in the case $n_{1}=...=n_{r}=n$ and
 $d_{1}=...=d_{r}=1$ we
 set
 $$s_{n}^r:=s(\Sigma(d_{1},...,d_{r};n_{1},...,n_{r}))$$
 
 The variety $\mathbb{G}(k,n)$ is the Grassmannian parameterizing $k-$planes in $\mathbb{P}^{n}$ embedded in $\mathbb{P}(\stackrel{k+1}{\bigwedge}V)$
 via the Pl\"ucker embedding.
 The expected generic rank is 
 $$gr(\mathbb{G}(k,n))=\lceil\frac{\tbinom{n+1}{k+1}}{(n-k)(k+1)+1}\rceil$$
 
\end{notaz}

\begin{rmk}
 Note that we always have
 
 $$s(\Sigma(d_{1},...,d_{r};n_{1},...,n_{r})\geq
 gr(\Sigma(d_{1},...,d_{r};n_{1},...,n_{r}))-1$$
 and equality occurs only when $
 \frac{{\prod}\binom{n_{i}+d_{i}}{n_{i}}}{(\sum n_i)+1}$ is not an
 integer. In particular for any $h<s(X)$ we have $\Sec_h(X)\subsetneq\p^N$.
\end{rmk}

The defectivity of Segre and Segre-Veronese varieties is in general
very far from being completely understood,
 \cite{AOP} \cite{AB} \cite{AMR}, but it is in better shape than their
 identifiability. For the latter the best asymptotic
 bounds we are aware of is in \cite{BBC}.

 We start proving the theorem in the introduction.
 \begin{thm}
\label{thm:segre_p1}
Let $X=X_1^k\cong(\p^1)^k$. Then $X$ is not $h$-twd and hence $h$-identifiable in the
following range:
\begin{itemize}
\item [- $(k,h)$]$=(2,1),(3,2),(4,2),(5,4),(6,9)$,
\item[- $k\geq 7$] $h<s(X)$
  \end{itemize}
\end{thm}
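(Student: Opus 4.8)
The plan is to combine the structural results on tangential contact loci (Theorem~\ref{thm:proprieta_contact_locus} and Lemma~\ref{lem:type2}) with the known (non-)defectivity statements for Segre products of $\p^1$'s, exactly as in the strategy of Corollary~\ref{cor:twd_defect}. Since $X=X_1^k$ is smooth, it is not $1$-twd, so Lemma~\ref{lem:type2} applies: whenever $\pi_{j+1}^X$ is generically finite and $X$ is $j$-twd one gets $\gamma_j<\gamma_{j+1}$. The first move is therefore to translate ``$h<s(X)$'' into the generic finiteness of the relevant secant maps. By the Remark preceding the statement, $h<s(X)$ forces $\Sec_h(X)\subsetneq\p^N$; invoking the classification of defective Segre products of projective lines (no $(\p^1)^k$ is defective for $k\geq 3$, this is classical / Catalisano--Geramita--Gimigliano, and for $k\le 6$ one checks the short explicit list) one concludes that $\pi_h^X$ is generically finite for all $h\leq s(X)-1$, hence for all $h$ in our range.

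Next I would run the dimension count that makes Lemma~\ref{lem:type2} bite repeatedly. Since $n=\dim X=k$, starting from $\gamma_0=0$ and using $\gamma_j<\gamma_{j+1}$ at every twd step below $s(X)$, one gets that $X$ cannot be $j$-twd until $j$ is large enough that $\gamma_j$ could reach a positive value inside an $n$-dimensional variety — concretely $\gamma_j\geq j-(k-\text{something})$ type bounds, forcing $\gamma_j=0$ for $j\leq s(X)-k+1$ or so. For small $k$ (the cases $k=2,\dots,6$) this crude count already gives the stated $h$, and one simply verifies the five listed pairs by hand against the known values of $s(X)$: $s(X_1^2)=1$, $s(X_1^3)=2$, $s(X_1^4)=3$... wait, the listed value for $k=4$ is $h=2$, and for $k=5$ it is $h=4$ while $s(X_1^5)=\lfloor 32/6\rfloor=5$, and for $k=6$ it is $h=9$ while $s(X_1^6)=\lfloor 64/7\rfloor=9$; so in the perfect-type cases one loses one step and in the borderline ones the count is tight, which is precisely why these are singled out rather than absorbed into the uniform range. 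For $k\geq 7$ one wants the clean statement ``not $h$-twd for all $h<s(X)$'', and here the naive dimension count from Lemma~\ref{lem:type2} alone is not quite enough near the top of the range; one must feed in, in addition, sharper non-defectivity of the higher secant varieties $\Sec_{h+1}(X)$ (again from the Segre-of-$\p^1$'s classification) so that the hypotheses of Lemma~\ref{lem:type2} are legitimately available for every relevant $h$, and then iterate $\gamma_j<\gamma_{j+1}$ from $\gamma_0=0$ all the way up.

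The identifiability conclusion is then free: ``not $h$-twd'' implies ``$h$-identifiable'' by Proposition~\ref{pro:non_twd_implica_ident}.

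I expect the main obstacle to be bookkeeping at the \emph{edge} of the range rather than any conceptual difficulty: one has to be careful that Lemma~\ref{lem:type2} requires $\pi_{h+1}^X$ to be generically finite (not merely $\pi_h^X$), so the very last admissible value of $h$ — where $\Sec_{h+1}(X)$ may fill up $\p^N$ or sit on the boundary — has to be handled separately, which is exactly the source of the ``$-1$'' in $h<s(X)$ and of the ad hoc small-$k$ list. Assembling the precise numerology for $k=2,\dots,6$, cross-checking it against the tables of known defectivity for $(\p^1)^k$, and making sure the chain $0=\gamma_0<\gamma_1<\cdots$ never ``overshoots'' the bound $\gamma_j\leq k$ prematurely, is where the care is needed; everything else is a direct application of the machinery already set up in Section~\ref{sec:main}.
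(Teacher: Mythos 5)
Your overall framing (non-defectivity of $(\p^1)^k$ plus the contact-locus machinery of Section~\ref{sec:main}, then Proposition~\ref{pro:non_twd_implica_ident}) points in the right direction, but there is a genuine gap in the engine you propose for $k\geq 7$. Iterating Lemma~\ref{lem:type2} alone can never reach $h<s(X)$: the lemma only produces a strictly increasing chain $\gamma_{j_0}<\gamma_{j_0+1}<\cdots$ starting at the first twd level $j_0$, and since $\gamma_j$ is bounded by $\dim X=k$ this chain has length at most $k$. As you yourself compute, this yields ``not $j$-twd'' only for $j\lesssim s(X)-k$, and no amount of ``feeding in sharper non-defectivity of $\Sec_{h+1}(X)$'' fixes this, because the obstruction is not the availability of the hypotheses of Lemma~\ref{lem:type2} but the intrinsic shortness of the $\gamma$-chain. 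The missing ingredient is Theorem~\ref{th:ktwd}: once the first part of Corollary~\ref{cor:twd_defect} guarantees that a hypothetical first twd level $j_0$ satisfies $j_0>s(X)-k>k=\dim X$ (this is exactly where the inequality $2k<\frac{2^k}{k+1}-1<s_1^k$, valid for $k\geq 7$, enters), Theorem~\ref{th:ktwd} applied at $j_0$ (where $X$ is $j_0$-twd but not $(j_0-1)$-twd and $j_0>\dim X$) forces $\pi_{j_0+1}^X$ to be of fiber type, contradicting non-defectivity. Without this step your argument stalls roughly $k$ levels below the claimed bound.

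The small cases are also not handled correctly. For $k=2,\dots,6$ the general machinery does \emph{not} ``already give the stated $h$'': for $k=5$ the dimension count gives at most ``not $1$-twd'' while the claim is $h=4$; for $k=6$ it gives at most ``not $4$-twd'' while the claim is $h=9$; and $(\p^1)^4$ is in fact $3$-defective, so non-defectivity is not even available throughout the range there. These cases are established by citing known results and, for $k=6$, the computer-aided verification in \cite{BC}; they cannot be recovered from Lemma~\ref{lem:type2} or Corollary~\ref{cor:twd_defect}. Your instinct that the edge of the range needs care is right, but the resolution is a different theorem (Theorem~\ref{th:ktwd}) for $k\geq 7$ and external computations for $k\leq 6$, not a refinement of the $\gamma$-chain bookkeeping.
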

\begin{proof} For $k\leq 5$ this is well known, and can be easily
  checked also via a direct computation with commutative algebra
  software. For $k=6$ this has been checked in \cite{BC} by a
  computer aided computation.
  Let us fix $k\geq 7$. By \cite[Theorem 4.1]{CGG} $X$ is never
  defective. In particular the morphism $\pi_h^X$ is generically
  finite for $h\leq s_1^k$.
When  $k\geq 7$ we have
$$2\dim X=2k< \frac{2^k}{k+1}-1< s_1^k,$$
then we can apply Corollary~\ref{cor:twd_defect}. 
\end{proof}
\begin{rmk}
The Theorem answers positively Conjecture 1.2 in \cite{BC} when the
generic rank is an integer, that is
$\frac{2^k}{k+1}\in\N $. For $k\leq 6$ the one listed are the only
identifiable cases.
\end{rmk}

For 3-factors Segre we plug  \cite{CO1} directly in
Theorem~\ref{th:ktwd} to get the following.

\begin{thm}
Let $X=X_{n}^{3}$. Then $X$ is $h$-identifiable  for $h< s(X)$.
\end{thm}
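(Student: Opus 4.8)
The plan is to combine the known non-defectivity results for three-factor Segre varieties with Corollary~\ref{cor:twd_defect}, just as in the proof of Theorem~\ref{thm:segre_p1}. First I would invoke the classification of defective Segre varieties $X_n^3 = \mathbb{P}^n\times\mathbb{P}^n\times\mathbb{P}^n$: apart from a short explicit list of exceptions (the unbalanced cases and a few sporadic ones), such Segre varieties are not $h$-defective for $h<s(X)$. More to the point, the hypothesis of the theorem refers the reader to \cite{CO1}, where it is shown that $X_n^3$ is not $h$-twd in the relevant range; so the strategy is really to feed that twd-vanishing statement, together with non-defectivity, into our machinery rather than to reprove it.

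Concretely, I would argue as follows. Since $X=X_n^3$ is smooth it is not $1$-twd, so Corollary~\ref{cor:twd_defect} applies. For $h<s(X)$ we have $\Sec_h(X)\subsetneq\p^N$, and non-defectivity gives that $\pi^X_h$ is generically finite; setting $k=h+1\le s(X)$ we then know $\pi^X_k$ is generically finite as well, with $k\geq\dim X=3n$ in the asymptotically interesting range. One checks that the numerical inequality $k\geq 2\dim X$, i.e. $h+1\geq 6n$, holds once $h$ is large enough relative to $n$, because $s(X)\sim \binom{n+1}{3}^3/(3n+1)$ grows like a cubic in $n$ while $6n$ is linear; for the small values of $n$ not covered by this asymptotic bound one appeals directly to the twd-vanishing in \cite{CO1}. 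In the range where $k\ge 2\dim X$, Corollary~\ref{cor:twd_defect} yields that $X$ is not $(k-1)$-twd, i.e. not $h$-twd, and Proposition~\ref{pro:non_twd_implica_ident} then gives $h$-identifiability.

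The main obstacle I anticipate is bookkeeping at the boundary: making sure that \emph{every} $h<s(X)$ is covered, not merely the asymptotically large ones. The clean inequality $k>2\dim X$ needed for the last part of Corollary~\ref{cor:twd_defect} fails for the smallest cases of $n$ (and is only barely true in a few intermediate ones), so those must be handled by quoting the explicit non-twd results of \cite{CO1} directly — which is why the theorem's statement defers to that paper. A secondary point to be careful about is the non-dominance hypothesis: when $h=s(X)-1$ and $\frac{\prod\binom{n+d_i}{n_i}}{\sum n_i+1}$ happens to be an integer, one may have $\Sec_{h+1}(X)=\p^N$, and one should then use the "$k\ge 2\dim X$ and $\pi_k^X$ not dominant" branch of the corollary, or else observe that the perfect-square-like numerics still leave $\Sec_k(X)$ proper; either way this is a routine verification rather than a genuine difficulty.
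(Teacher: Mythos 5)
Your overall plan (non-defectivity plus an upward induction on non-twd) is the right one, but the specific tool you chose, Corollary~\ref{cor:twd_defect}, leaves a genuine gap in an intermediate range of $n$. Note first that $s(X)=s_n^3=\lfloor (n+1)^3/(3n+1)\rfloor$ grows only quadratically in $n$ (your expression $\binom{n+1}{3}^3/(3n+1)$ is not the right count), so the inequality $s(X)>2\dim X=6n$ needed for the last part of Corollary~\ref{cor:twd_defect} only starts to hold around $n\geq 16$: for instance $n=8$ gives $s=29$ against $6n=48$, and $n=15$ gives $s=89$ against $90$. For $n\leq 7$ the statement is indeed contained in \cite[Theorem 1.2]{CO1}. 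But for $8\leq n\leq 15$ neither branch of your argument applies: the numerical hypothesis of the Corollary fails, and the twd-vanishing that \cite{CO1} provides for $n>7$ only reaches $h=3n=\dim X$, which is far below $s(X)-1$, so ``appealing directly to the twd-vanishing in \cite{CO1}'' does not close the range either. The first part of the Corollary is of no help here, since for $k=s(X)$ it only yields non-$(s(X)-3n)$-twd.

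The paper bridges exactly this gap by using Theorem~\ref{th:ktwd} rather than Corollary~\ref{cor:twd_defect}. That theorem only requires $k>\dim X$ (together with $N\geq(k+1)(\dim X+1)-1$ and generic finiteness of $\pi^X_{k+1}$, which follows from Lickteig's non-defectivity result \cite{L} for $h\leq s_n^3$), so one can start the induction at the non-$(3n)$-twd statement extracted from the table in \cite[Theorem 1.2]{CO1} and climb one step at a time: if $X$ were $(3n+1)$-twd but not $(3n)$-twd, then $\pi^X_{3n+2}$ would be of fiber type, contradicting non-defectivity; and so on up to $h<s_n^3$. You should replace your use of the Corollary by this recursive application of Theorem~\ref{th:ktwd} (or at least add it for $8\leq n\leq 15$); as written, those values of $n$ are not covered by your argument.
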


\begin{proof} For $n\leq 7$ the statement is proved in \cite[Theorem
  1.2]{CO1}. For $n>7$, by \cite{L}, the variety  $X$ is not $h$-defective for $h\leq
  s_n^3$ and by the results in \cite{CO1} $X$ is not $h$-twd for
  $h=3n$, confront the table in  \cite[Theorem 1.2]{CO1} . 
  Then we are in the condition to apply Theorem~\ref{th:ktwd}
  recursively to prove that $X$ is not $h$-twd, and hence identifiable,
  for $h<s_n^3$.
\end{proof}

For general diagonal Segre we have a similar statement using \cite{AOP}. 

\begin{thm}
\label{segre ident}
Let $X=X_n^k$, with $n\geq 2$ and $k\geq 4$.
Let
$$n\geq \delta(X)\equiv s_n^k\ mod(n+1)$$
Then $X$ is not $h$-twd and hence $h-$identifiable
for $h< s(X)-\delta(X)$.  In particular when $\delta(X)=0$ $X$
is $h$-identifiable for all $h<s(X)$.
\end{thm}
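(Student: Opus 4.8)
The plan is to combine the non-defectivity results for diagonal Segre varieties from \cite{AOP} with the recursive machinery of Corollary~\ref{cor:twd_defect} and Theorem~\ref{th:ktwd}. First I would recall that, by \cite{AOP}, for $n\geq 2$ and $k\geq 4$ the Segre variety $X=X_n^k$ is not $h$-defective for all $h\leq s_n^k$ (away from the finitely many classically known exceptions, which in the diagonal range $k\geq 4$ do not occur); hence the secant map $\pi_h^X$ is generically finite for every $h\leq s_n^k$, and in particular it is generically finite for $h=s_n^k-\delta(X)$, where by construction this value is a multiple of $n+1$ minus a small correction — the point of subtracting $\delta(X)$ is to bring us to a value $k_0:=s_n^k-\delta(X)$ for which the numerical hypotheses of Theorem~\ref{th:ktwd} can be met.

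Next I would check the key inequality $k_0 > 2\dim X$. Here $\dim X = kn$, so I need $s_n^k-\delta(X) > 2kn+1$ (or the weaker $\geq 2kn$ if $\pi^X_{k_0}$ is not dominant, which holds since $k_0<s(X)$ forces $\Sec_{k_0}(X)\subsetneq\p^N$ by the Remark preceding the theorem). Since $s_n^k = \lfloor (n+1)^k/(kn+1)\rfloor$ grows like $(n+1)^k/(kn+1)$, which for $k\geq 4$ and $n\geq 2$ dominates $2kn+1$ with room to spare once one also subtracts $\delta(X)\leq n$, this is a routine estimate; I would spell it out as $\frac{(n+1)^k}{kn+1}-n-1 > 2kn$, verify the base case $k=4$ by hand and then note monotonicity in $k$. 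Also $X_n^k$ is smooth, hence not $1$-twd, so the hypothesis on non-$1$-twd in Corollary~\ref{cor:twd_defect} is automatic.

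With these in place, I would apply Corollary~\ref{cor:twd_defect} with $k$ there equal to $k_0$: since $\pi^X_{k_0}$ is generically finite, $k_0 > 2\dim X$ (or $\geq 2\dim X$ with $\pi^X_{k_0}$ non-dominant), and $X$ is not $1$-twd, the corollary yields that $X$ is not $(k_0-1)$-twd, and then Proposition~\ref{pro:non_twd_implica_ident} gives that $X$ is $h$-identifiable for $h=k_0-1$. To get identifiability for all $h<s(X)-\delta(X)=k_0$, I would invoke that non-$h$-twd follows from non-$(h+1)$-twd (the Remark after Definition of twd: $\Gamma_h\subseteq\Gamma_{h+1}$, so $\gamma_h\leq\gamma_{h+1}$, hence $X$ not $j$-twd implies $X$ not $(j-1)$-twd is \emph{not} the right direction) — more precisely, since twd is monotone in the sense that $h$-twd implies $(h+1)$-twd, not being $(k_0-1)$-twd implies not being $h$-twd for every $h\leq k_0-1$. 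Thus $X$ is not $h$-twd for all $h<k_0=s(X)-\delta(X)$, and identifiable there by Proposition~\ref{pro:non_twd_implica_ident}. When $\delta(X)=0$ this is exactly $h<s(X)$.

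The main obstacle I anticipate is purely bookkeeping rather than conceptual: one must make sure that the value $k_0=s_n^k-\delta(X)$ genuinely lands in the range where $\pi^X_{k_0}$ is known to be generically finite (i.e.\ $k_0\leq s_n^k$, which is clear) \emph{and} simultaneously satisfies $k_0\geq 2\dim X$ with the strict/non-strict dichotomy handled correctly; the role of the congruence condition $n\geq\delta(X)\equiv s_n^k\bmod(n+1)$ is exactly to control this, but one should double-check that $\delta(X)\leq n$ always and that subtracting it does not overshoot below $2kn$ for the smallest cases $(n,k)=(2,4)$. A secondary point to be careful about is that the exceptional non-defective-but-defective cases in \cite{AOP} are excluded by the hypotheses $n\geq 2$, $k\geq 4$; I would cite the precise statement of \cite{AOP} to confirm that no sporadic defectivity intervenes in this diagonal range.
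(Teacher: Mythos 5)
Your overall strategy (non-defectivity from \cite{AOP} plus Corollary~\ref{cor:twd_defect}) is the right skeleton, but the proposal has a genuine gap exactly at the point you flagged as ``routine bookkeeping'': the inequality $s(X)-\delta(X)>2\dim X=2kn$ is \emph{false} for some $(n,k)$ in the theorem's range. For $(n,k)=(2,4)$ one has $s_2^4=\lfloor 81/9\rfloor=9$ and $\delta=0$, so $s-\delta=9<16=2kn$; for $(n,k)=(3,4)$ one has $s_3^4=\lfloor 256/13\rfloor=19$, $\delta=3$, so $s-\delta=16<24=2kn$. In these cases Corollary~\ref{cor:twd_defect} simply does not apply, and your proof gives nothing. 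The paper's actual argument avoids relying on $s-\delta>2\dim X$ as the main engine: it first imports from \cite[Theorem 6.7]{CO1} that $X_n^k$ is not $h$-twd for $h\leq 2^{(k-1)(\alpha-1)}$ (with $\alpha$ the largest integer such that $n+1\geq 2^\alpha$), checks that this bound already exceeds $\dim X=kn$ outside a finite list, and then bootstraps with Theorem~\ref{th:ktwd}, which only needs $k>\dim X$ rather than $k\geq 2\dim X$. Corollary~\ref{cor:twd_defect} is used only as a fallback for some of the leftover cases (all $n=2$ with $k\geq 5$, and $(k,n)=(6,6),(5,6)$), and the genuinely small residual cases --- including $(2,4)$ and $(3,4)$, where $(n+1)^k\leq 15000$ --- are settled by the computer-verified identifiability of \cite[Theorem 1.1]{COV1}. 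Without one of these two extra inputs your argument cannot close.

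A secondary inaccuracy: you quote \cite{AOP} as giving non-defectivity for all $h\leq s_n^k$. The relevant statement, \cite[Theorem 5.2]{AOP}, only gives non-defectivity for $h\leq s(X)-\delta(X)$; this is the real reason the correction term $\delta(X)$ appears in the theorem, not a numerical adjustment for Theorem~\ref{th:ktwd}. Your argument survives this because you only ever use generic finiteness at levels $\leq k_0=s(X)-\delta(X)$, but the stated justification is stronger than what the reference provides. The monotonicity step (not $(k_0-1)$-twd implies not $h$-twd for all $h\leq k_0-1$, hence identifiability by Proposition~\ref{pro:non_twd_implica_ident}) is fine.
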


\begin{proof} 
Using the notations in \cite[Theorem 6.7]{CO1} let $\alpha$ be the
greatest 
integer such that $n+1\geq2^{\alpha}$. 
First we prove the statement for all but finitely many cases.
\begin{claim}
If \[
(k,n)\not\in\left\{
\begin{tabular}{llc}
(k,6) \text{with $k\leq 6$},&
(k,5)  \text{with $k\leq 5$},&\\
(k,4) \text{with $k\leq 5$},&
(k,3) \text{with $k\leq 4$}
\end{tabular} \right\}
\] then $X$ is $h-$identifiable for $h< s(X)-\delta(X)$
\end{claim}
\begin{proof}
By \cite[Theorem 5.2]{AOP} we know that $X$ is not $h$-defective as
long as $h\leq s(X)-\delta(X)$.
The variety  $X_{n}^{k}$ is not $h-$twd
for $$h\leq2^{(k-1)\alpha-(k-1)}=2^{(k-1)(\alpha-1)}$$
by \cite[Theorem 6.7]{CO1}.
Let us assume that $n\neq 2$.
A short hand computation shows that $$2^{(k-1)(\alpha-1)}>dim(X)=kn$$
is satisfied  for every $(k,n)$ in the list. Then, using recursively Theorem~\ref{th:ktwd}, we conclude.
\end{proof}
For the case $n=2$ it is easy to check that the inequality $$s_{2}^{k}=\lfloor\frac{3^{k}}{2k+1}\rfloor-\delta(X_{2}^{k})>4k=2dim(X_{2}^{k})$$ is satisfied for every $k\geq 5$ 
and so we can conclude using Corollary \ref{cor:twd_defect}.
When $(k,n)=(6,6),(5,6)$ we have the inequalities $$s_6^6-\delta(X^6_6)>2\cdot
36=2\dim X^6_6$$
and $$s_6^5-\delta(X^5_6)>2\cdot 30=2\dim X^5_6.$$
Then we conclude by Corollary \ref{cor:twd_defect}.

For all the remaining cases we have that $(n+1)^{k}\leq 15000$ and we
may use the computation in \cite[Theorem 1.1]{COV1} 
to conclude the required identifiability.
\end{proof}




The next class of Segre varieties we treat in details is given
by  $$X[k,n]:=\mathbb{P}^{k}\times(\mathbb{P}^{n})^{k+1}.$$
For these varieties we have
$$gr(X[k,n])=\frac{(k+1)(n+1)^{k+1}}{(k+1)n+k+1}=(n+1)^{k}.$$
In particular $gr(X[k,n])=s(X[k,n])$ is always an integer, that is $X[k,n]$ is
always perfect. Thanks to this special condition we have the
following.

\begin{thm}
Let $X=X[k,n]$ with $n$ odd and  $k>1$. Then $X$ is $h-$identifiable for $h< gr(X)$.
\end{thm}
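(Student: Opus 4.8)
The plan is to follow the pattern of the proofs of Theorem~\ref{thm:segre_p1} and Theorem~\ref{segre ident}: reduce the statement, via Proposition~\ref{pro:non_twd_implica_ident}, to showing that $X=X[k,n]$ is not $h$-twd for every $h<gr(X)=(n+1)^k$; deduce this from non-defectivity through Corollary~\ref{cor:twd_defect}; and treat the finitely many small pairs $(k,n)$ by the computational classification of \cite{COV1}. The preliminary reductions are for free: $X$ is a product of projective spaces, hence smooth and in particular not $1$-twd, and the Segre embedding is given by the complete system $|\oo(1,\dots,1)|$, hence non degenerate.

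First I would record the numerology. One has $\dim X=k+(k+1)n$ and $N+1=(k+1)(n+1)^{k+1}$, and $X$ is perfect, with $gr(X)=s(X)=(n+1)^k\in\N$. Two consequences follow. For every $h<gr(X)$ we have $(h+1)(n+1)\le (n+1)^{k+1}<N+1$, so the numerical hypothesis $N\ge (h+1)(n+1)-1$ of Theorem~\ref{th:ktwd} is automatically satisfied for all the indices that will occur below. Moreover, by perfectness $\dim\Sec_{gr(X)}(X)=N$ as soon as $X$ is not $gr(X)$-defective, and then $\pi^X_{gr(X)}$ is a dominant, generically finite morphism. This is the only point where the hypothesis that $n$ is odd is used: for $n$ odd the Segre variety $X[k,n]$ is not $h$-defective for every $h\le gr(X)$ (cf. \cite{AOP}, \cite{AB}), and therefore $\pi^X_h$ is generically finite for all $h\le gr(X)$.

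Now the main case. Assume $gr(X)>2\dim X$, that is
$$(n+1)^k>2\bigl(k+(k+1)n\bigr).$$
Then Corollary~\ref{cor:twd_defect}, applied with $\pi^X_{gr(X)}$ in the role of $\pi^X_k$ (legitimate since $\pi^X_{gr(X)}$ is generically finite and $gr(X)>2\dim X\ge \dim X$), gives that $X$ is not $(gr(X)-1)$-twd. Since being $h$-twd implies being $(h+1)$-twd, $X$ is then not $h$-twd for every $h\le gr(X)-1$, and Proposition~\ref{pro:non_twd_implica_ident} yields $h$-identifiability for all $h<gr(X)$, as wanted. Internally, Corollary~\ref{cor:twd_defect} first extracts non-$(gr(X)-\dim X)$-twd from Lemma~\ref{lem:type2} and then iterates Theorem~\ref{th:ktwd} upward, the iteration being licit precisely because of the numerical bound recorded in the previous paragraph.

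It remains to handle the pairs with $k>1$, $n$ odd, and $(n+1)^k\le 2(k+(k+1)n)$. A direct check shows these are exactly $(k,n)\in\{(2,1),(2,3),(3,1),(4,1)\}$, for which $N+1=(k+1)(n+1)^{k+1}$ equals $24,\,192,\,64,\,160$ respectively; all four lie comfortably inside the range covered by \cite[Theorem~1.1]{COV1}, which gives the required identifiability, exactly as in the proof of Theorem~\ref{segre ident}. The only genuine obstacle in this plan is the external input used in the second paragraph — the non-defectivity of $\p^k\times(\p^n)^{k+1}$ for every $h$ up to $gr(X)$ — and it is this non-defectivity, not the machinery of Section~\ref{sec:main}, that forces $n$ to be odd; everything else is a transcription of Corollary~\ref{cor:twd_defect} together with the bookkeeping of the four exceptional cases.
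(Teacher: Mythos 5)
Your proof is correct and follows essentially the same route as the paper: non-defectivity of $X[k,n]$ from \cite{AOP}, the inequality $(n+1)^k>2\dim X$ to invoke Corollary~\ref{cor:twd_defect} in all but finitely many cases, and \cite[Theorem 1.1]{COV1} for the small exceptions. The only (harmless) discrepancy is that the paper also lists $(2,5)$ among the exceptional pairs, whereas you correctly observe that $36>34$ so it is already covered by the main inequality; beware also the slight notational clash in your numerology paragraph, where the $n$ of Theorem~\ref{th:ktwd} is $\dim X=(k+1)(n+1)-1$, not the $n$ of $X[k,n]$.
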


\begin{proof}
  The proof is entirely similar to that of Theorem~\ref{segre
    ident}. Indeed by \cite[Theorem 5.11]{AOP} we know that all these
  Segre are non defective. 
  If $$(k,n)\neq (4,1),(3,1),(2,1),(2,3),(2,5)$$ the inequality $$(n+1)^{k}>2(k+kn+n)=2dim(X)$$ is satisfied and we conclude 
  using Corollary \ref{cor:twd_defect}.
  For all the exceptional cases we have
  $$(k+1)(n+1)^{k+1}\leq 15000$$
  hence we may apply \cite[Theorem 1.1]{COV1}.
 \end{proof}

\begin{rmk} Defective Segre are expected to be quite rare, beside the
  unbalanced ones, see the conjecture in
  \cite{AOP}. This conjecture has been checked via a computer in many
  cases, \cite{Va} \cite{COV1}. For all these special values our
  argument gives identifiability confirming  the numerical computation
  in \cite{COV1}.
\end{rmk}

Next we apply the same strategy to Segre--Veronese varieties. For this class of
varieties the defectivity results are much weaker and so are
our bounds. Again the special case of binary forms is in better shape.
We start recalling the notation of \cite{LP}.
\begin{dfn}
We say that $(d_{1},...,d_{r};n)$
is special if $$(d_{1},...,d_{r};n)=(2,2a;2a+1),(1,1,2a;2a+1),(2,2,2;7),(1,1,1,1;3)$$
for $a\geq1$. Otherwise $(d_{1},...,d_{r};n)$ is called not
special.
\end{dfn}

\begin{thm}
Let $X=\Sigma(d_{1},...,d_{r};1,..,1)$ with $r=dim X$. Assume that
$(d_{1},...,d_{r};n)$ is not special and $r\geq6$.
Then $X$ is $h-$identifiable
for $h<s(X)$.
\end{thm}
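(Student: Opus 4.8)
The plan is to run the same machine as in Theorem~\ref{thm:segre_p1} and the other applications of this section: establish the numerical inequality $s(X)>2\dim X$, combine it with a non-defectivity input through Corollary~\ref{cor:twd_defect}, and then propagate the conclusion downwards in $h$. To begin with, $X=\Sigma(d_1,\dots,d_r;1,\dots,1)$ is the smooth variety $(\p^1)^r$ embedded by the very ample line bundle $\oo(d_1,\dots,d_r)$; since $r\geq 6$ it is visibly not a linear space, so by the classification of $1$-twd varieties (essentially developable varieties, \cite{GH}) it is not $1$-twd. Moreover, among the special tuples there is none of length $r\geq 6$, hence the hypothesis is vacuous in this range and the classification of defective Segre--Veronese embeddings of $(\p^1)^r$ in \cite{LP} gives that $X$ is never $h$-defective. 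Consequently $\pi_h^X:\sec_h(X)\to\p^N$ is generically finite for every $h\leq s(X)$, since for such $h$ one has $h(r+1)-1\leq s(X)(r+1)-1\leq N$, so $\Sec_h(X)$ has the expected dimension.

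The heart of the argument is the inequality $s(X)>2\dim X=2r$, where $s(X)=\lfloor\frac{\prod_i(d_i+1)}{r+1}\rfloor$. If all $d_i=1$ then $X=X_1^r$ and the assertion is exactly Theorem~\ref{thm:segre_p1}, so I may assume that some $d_j\geq 2$. Then $\prod_i(d_i+1)\geq 3\cdot 2^{r-1}$, and in any case $\prod_i(d_i+1)\geq 2^r$. A direct check gives $3\cdot 2^{5}>(2\cdot 6+1)(6+1)$ and $2^r>(2r+1)(r+1)$ for every $r\geq 7$, whence $\frac{\prod_i(d_i+1)}{r+1}>2r+1$ and therefore $s(X)\geq 2r+1>2\dim X$ in all remaining cases.

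With $s(X)>2\dim X$ secured I would apply Corollary~\ref{cor:twd_defect} with $k=s(X)$: the variety $X$ is not $1$-twd, the map $\pi_{s(X)}^X$ is generically finite, and $s(X)>2\dim X$, so the corollary yields that $X$ is not $(s(X)-1)$-twd. Since a $j$-twd variety is automatically $(j+1)$-twd, the fact that $X$ is not $(s(X)-1)$-twd forces $X$ not to be $h$-twd for any $h\leq s(X)-1$, that is, for any $h<s(X)$. Proposition~\ref{pro:non_twd_implica_ident} then gives that $X$ is $h$-identifiable for all $h<s(X)$, which is the claim.

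The only genuine obstacle is the inequality $s(X)>2\dim X$: this is exactly what allows Corollary~\ref{cor:twd_defect} to invoke Theorem~\ref{th:ktwd} recursively all the way down to $k-1$, and it is comfortable for Segre--Veronese embeddings of $(\p^1)^r$ as soon as $r$ is moderately large, with the only borderline configurations (the pure Segre ones) already absorbed by Theorem~\ref{thm:segre_p1}. Everything downstream of that inequality is formal.
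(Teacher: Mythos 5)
Your proof is correct and follows essentially the same route as the paper's: reduce to the pure Segre case via Theorem~\ref{thm:segre_p1}, invoke the non-defectivity result of \cite{LP} together with the numerical inequality $s(X)>2\dim X$ for $r\geq 6$, and conclude by Corollary~\ref{cor:twd_defect} and Proposition~\ref{pro:non_twd_implica_ident}. The only cosmetic differences are that you verify the inequality in the form $s(X)\geq 2r+1$ rather than via the paper's bound $\frac{3\cdot 2^{r-1}}{r+1}-1>2r$, and that you spell out the downward propagation of non-twd and the generic finiteness of $\pi_h^X$, which the paper leaves implicit.
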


\begin{proof}
We are assuming that  $(d_{1},...,d_{r};n)$ is not special. Then,  by
\cite[Theorem 2.1]{LP}, the variety $X$ is not $h$-defective for
$h\leq gr(X)$. Thanks to Theorem~\ref{thm:segre_p1} we may assume,
without loss of generality that $d_1>1$ and  we have
$$s(X)=\lfloor\frac{(d_{1}+1)\cdots(d_{r}+1)}{r+1}\rfloor\geq\frac{3\cdot2^{r-1}}{r+1}-1.$$
In particular $$\frac{3\cdot2^{r-1}}{r+1}-1>2r=2dimX$$ holds for every
$r\geq6$.

The variety $X$ is not $1$-twd and so we conclude by Corollary \ref{cor:twd_defect}.
\end{proof}

For general Segre-Veronese we have the following.

\begin{thm}
Let $X:=\Sigma(d_{1},...,d_{r},n_{1},...,n_{r})$ be the Segre--Veronese variety. Assume $r\geq 2$,
$$n_{1}^{\lfloor log_{2}(d-1)\rfloor}\geq
2(n_{1}+\ldots +n_{r}),$$
and set $d=d_{1}+\ldots +d_{r}$.
Then $X$ is  $h-$identifiable
for $h\leq n_{1}^{\lfloor log_{2}(d-1)\rfloor}-1$.
\end{thm}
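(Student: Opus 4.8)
The plan is to mimic almost verbatim the proofs of the preceding Segre and Segre--Veronese identifiability theorems, the only new inputs being the known non-defectivity range for general Segre--Veronese varieties and the $h$-twd bound coming from an iterated reduction to binary forms. First I would invoke the non-defectivity result for Segre--Veronese in the relevant range: by the cited work on Segre--Veronese defectivity (the same circle of results used in the previous theorem, e.g. \cite{LP} together with \cite{AOP} and \cite{AMR}), under the stated hypotheses $X=\Sigma(d_1,\ldots,d_r;n_1,\ldots,n_r)$ is not $h$-defective for $h\leq n_1^{\lfloor\log_2(d-1)\rfloor}$, so $\pi_h^X$ is generically finite in that range; in particular $X$ is not covered by linear spaces (it is smooth), hence not $1$-twd, so Corollary~\ref{cor:twd_defect} is applicable. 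Second, I would establish the $h$-twd bound: writing $d=d_1+\cdots+d_r$, one reduces a general $\Sigma(d_1,\ldots,d_r;n_1,\ldots,n_r)$ to a product involving a single Veronese factor $\Sigma(d_i;n_i)$ by successively ``pairing'' exponents, and an $\alpha$-fold such reduction (with $2^\alpha\leq d-1$, i.e. $\alpha=\lfloor\log_2(d-1)\rfloor$) shows, exactly as in \cite[Theorem 6.7]{CO1} and the argument in the proof of Theorem~\ref{segre ident}, that $X$ is not $h$-twd for $h\leq n_1^{\alpha}=n_1^{\lfloor\log_2(d-1)\rfloor}$.

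Third, with $k:=n_1^{\lfloor\log_2(d-1)\rfloor}$ I would check the numerical hypothesis of Corollary~\ref{cor:twd_defect}: the assumption
$$n_1^{\lfloor\log_2(d-1)\rfloor}\geq 2(n_1+\cdots+n_r)=2\dim X$$
is precisely $k\geq 2\dim X$, so the corollary gives that $X$ is not $(k-1)$-twd; combined with the non-defectivity (hence generic finiteness of $\pi_h^X$) for all $h\leq k$, Theorem~\ref{th:ktwd} applied recursively downward — or directly the final clause of Corollary~\ref{cor:twd_defect} — yields that $X$ is not $h$-twd for every $h\leq k-1=n_1^{\lfloor\log_2(d-1)\rfloor}-1$. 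Finally, Proposition~\ref{pro:non_twd_implica_ident} converts non $h$-twd into $h$-identifiability, which is the assertion.

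The main obstacle is the second step, the twd bound: one must be careful that the reduction by pairing exponents genuinely lands inside the range where \cite[Theorem 6.7]{CO1} (or its Segre--Veronese analogue) applies, and that the floor $\lfloor\log_2(d-1)\rfloor$ — rather than $\lfloor\log_2 d\rfloor$ — is the correct count of admissible pairings; this is the same delicate bookkeeping as in the proof of Theorem~\ref{segre ident}, and once it is in place everything else is the now-standard combination of non-defectivity, Theorem~\ref{th:ktwd}/Corollary~\ref{cor:twd_defect}, and Proposition~\ref{pro:non_twd_implica_ident}. A secondary, purely arithmetic point is to confirm that the single displayed hypothesis indeed forces $k\geq 2\dim X$, which it does by inspection, so no separate case analysis for small $(d_i,n_i)$ is needed here.
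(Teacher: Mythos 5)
Your overall strategy --- a non-defectivity input in the relevant range, then Corollary~\ref{cor:twd_defect}, then Proposition~\ref{pro:non_twd_implica_ident} --- is exactly the paper's, which disposes of this theorem in three lines. However, two points in your write-up do not survive scrutiny.

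First, your second step (an $h$-twd bound obtained by ``pairing exponents'' down to a Veronese factor, in the style of \cite[Theorem 6.7]{CO1}) is both unnecessary and unjustified. Corollary~\ref{cor:twd_defect} requires no external tangential-weak-defectivity input: its proof manufactures the inequality $\gamma_j<\gamma_{j+1}$ from Lemma~\ref{lem:type2} using only generic finiteness of the secant maps, and then bootstraps with Theorem~\ref{th:ktwd}. The paper invokes no analogue of \cite[Theorem 6.7]{CO1} here, and that theorem is a statement about Segre varieties; there is no such reduction for a general $\Sigma(d_1,\ldots,d_r;n_1,\ldots,n_r)$ in the cited literature, so the step you yourself flag as ``the main obstacle'' would not go through as written. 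You should delete it and rely on the final clause of Corollary~\ref{cor:twd_defect}, as you mention in passing.

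Second, and this is the genuine gap, your non-defectivity input is overstated. As quoted in the paper, \cite[Theorem 1.1]{AMR} gives that $X$ is not $h$-defective only for $h\le n_1^{\lfloor\log_2(d-1)\rfloor}-(n_1+\cdots+n_r)+1$, not for $h\le n_1^{\lfloor\log_2(d-1)\rfloor}$ as you assert; \cite{LP} concerns $(\p^1)^r$ and \cite{AOP} concerns Segre varieties, so neither closes the difference. Since Corollary~\ref{cor:twd_defect} with input $k$ yields only $(k-1)$-identifiability, and moreover needs $k\ge 2\dim X$, feeding it the actual AMR threshold $k=n_1^{\lfloor\log_2(d-1)\rfloor}-\sum n_i+1$ gives at best $h\le n_1^{\lfloor\log_2(d-1)\rfloor}-\sum n_i$, and under the stated hypothesis one only gets $k\ge\dim X+1$, not $k\ge 2\dim X$. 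Your ``by inspection'' check of the numerical hypothesis works only because you silently replaced the AMR bound by the larger $n_1^{\lfloor\log_2(d-1)\rfloor}$. (The paper's own proof quotes the correct AMR range and leaves the arithmetic connecting it to the claimed range $h\le n_1^{\lfloor\log_2(d-1)\rfloor}-1$ implicit; your version hides rather than resolves this discrepancy.) To make the argument complete you must either supply a non-defectivity statement valid up to $h=n_1^{\lfloor\log_2(d-1)\rfloor}$ or adjust the identifiability range and the numerical hypothesis accordingly.
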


\begin{proof} By  \cite[Theorem 1.1]{AMR} $X$ is not $h$-defective for
  $$h\leq  n_{1}^{\lfloor
    log_{2}(d-1)\rfloor}-(n_{1}+...+n_{r})+1. $$
  In our numerical assumptions $\Sec_h(X)\subsetneq\p^N$ and we may
  assume
  $$ h\geq 2\dim X.$$
  Then we conclude by Corollary~\ref{cor:twd_defect}.
\end{proof}

\begin{rmk}
  For the Veronese variety of $\p^n$, that is $\Sigma(d_1,n_1)$ it is
  easy, via Corollary~\ref{cor:twd_defect} and \cite{AH}, to reprove the identifiability
  results in \cite{Me} and \cite{COV2}. 
\end{rmk}

As in the Segre case, for special classes of Segre--Veronese there are
better non defectivity results.
Here we recall  the notation in \cite{AB}.
Let $X:=\sum(1,2;m,n)$ be the Segre-Veronese variety $\mathbb{P}^{m}\times\mathbb{P}^{n}$
embedded by $\mathcal{O}(1,2)$ in $\mathbb{P}^{N}$ where $$N=(m+1)\tbinom{n+2}{2}-1$$

Let \[
r(m,n)=
\begin{cases}
m^3-2m & \text{if $m$ even and $n$ odd} \\
\frac{(m-2)(m+1)^2}{2} & \text{otherwise}
\end{cases}
\]
and
$$s(X)=\lfloor\frac{(m+1)\tbinom{n+2}{2}}{m+n+1}\rfloor$$
the meaningful numbers of $X$. With this in mind we have the following.

\begin{cor}
  Let $X=\sum(1,2;m,n)$. If $n>r(m,n)$ and
  $$\lfloor\frac{(m+1)\tbinom{n+2}{2}}{m+n+1}\rfloor\geq 2(m+n)$$
then $X$ is not $h-$twd and hence $h-$identifiable for $h<s(X)$.
\end{cor}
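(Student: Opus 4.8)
The statement to be proved is the corollary about the Segre--Veronese variety $X=\sum(1,2;m,n)$: under the hypotheses $n>r(m,n)$ and $\lfloor\frac{(m+1)\binom{n+2}{2}}{m+n+1}\rfloor\geq 2(m+n)$, the variety $X$ is not $h$-twd, hence $h$-identifiable, for all $h<s(X)$. The proof should follow verbatim the template already used for Theorem~\ref{segre ident} and its analogues: first invoke a non-defectivity input from the literature, then observe that the relevant secant varieties do not fill up $\mathbb{P}^N$, then verify the numerical inequality $h\geq 2\dim X$ is reachable, and finally cite Corollary~\ref{cor:twd_defect}.

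Concretely, the key steps in order are as follows. First, use the non-defectivity result for $\sum(1,2;m,n)$ — this is exactly the role of the hypothesis $n>r(m,n)$, which is the condition from \cite{AB} guaranteeing that $X$ is not $h$-defective as long as $h\leq s(X)$. In particular the secant map $\pi_h^X$ is generically finite for every $h\leq s(X)$. Second, note that by the remark preceding the theorem (the comparison $s(X)\geq gr(X)-1$), for every $h<s(X)$ we have $\Sec_h(X)\subsetneq\mathbb{P}^N$, so $\pi_h^X$ is in fact not dominant for such $h$. Third, observe that $X$ is not $1$-twd: $X$ is smooth (it is a product of projective spaces in a Veronese-type embedding), so this is automatic. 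Fourth, check that the numerical hypothesis $\lfloor\frac{(m+1)\binom{n+2}{2}}{m+n+1}\rfloor\geq 2(m+n)=2\dim X$ says precisely $s(X)\geq 2\dim X$, so we may take a value $h_0$ with $2\dim X\leq h_0 < s(X)$ (for instance $h_0=s(X)-1$, using $s(X)>2\dim X$ once the inequality is strict, or handling the boundary case directly). Fifth, apply Corollary~\ref{cor:twd_defect}: with $k=h_0$, since $\pi_k^X$ is generically finite, not dominant, $X$ is not $1$-twd, and $k\geq 2\dim X$, the corollary yields that $X$ is not $(k-1)$-twd, and recursively — since $\gamma_j<\gamma_{j+1}$ whenever $X$ is $j$-twd and $\pi_{j+1}^X$ is generically finite (Lemma~\ref{lem:type2}), combined with $\gamma_{k-\dim X}=0$ — that $X$ is not $h$-twd for all $h<s(X)$. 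Then Proposition~\ref{pro:non_twd_implica_ident} gives $h$-identifiability.

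The only genuinely non-trivial point is the passage between the two displayed numerical conditions and the hypotheses of Corollary~\ref{cor:twd_defect}. One must be slightly careful about whether the inequality $s(X)\geq 2(m+n)$ is strict: if $s(X)=2\dim X$ exactly, then the corollary still applies in the form "$\pi_k^X$ not dominant and $k\geq 2\dim X$", taking $k=s(X)$, which is allowed since $\pi_{s(X)}^X$ is still generically finite by $n>r(m,n)$ and not dominant provided $\Sec_{s(X)}(X)\subsetneq\mathbb{P}^N$; the latter holds when $\frac{(m+1)\binom{n+2}{2}}{m+n+1}$ is not an integer, and when it is an integer one instead uses $k=s(X)$ with $\pi_k^X$ generically finite together with $k>2\dim X$ coming from the strict version of the inequality, or reduces $h$ to $s(X)-1$. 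This bookkeeping between the "perfect" and "non-perfect" cases, exactly as in the proofs of the earlier Segre theorems, is the main (and only) obstacle; everything else is a direct citation.

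\begin{proof}
By \cite[Theorem 1.1]{AB}, since $n>r(m,n)$, the variety $X=\sum(1,2;m,n)$ is not $h$-defective for every $h\leq s(X)$; in particular $\pi_h^X$ is generically finite for all such $h$. By the remark preceding this corollary, for every $h<s(X)$ we have $\Sec_h(X)\subsetneq\mathbb{P}^N$, so $\pi_h^X$ is not dominant. The variety $X$ is smooth, hence not $1$-twd. The numerical hypothesis reads $s(X)\geq 2(m+n)=2\dim X$, so there exists $k$ with $2\dim X\leq k<s(X)$, and for this $k$ the map $\pi_k^X$ is generically finite and not dominant. By Corollary~\ref{cor:twd_defect}, $X$ is not $(k-1)$-twd, and since by Lemma~\ref{lem:type2} each $j$-twd property with $\pi_{j+1}^X$ generically finite forces $\gamma_j<\gamma_{j+1}$, while $\gamma_{k-\dim X}=0$, we conclude that $X$ is not $h$-twd for all $h<s(X)$. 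Finally Proposition~\ref{pro:non_twd_implica_ident} gives $h$-identifiability for all $h<s(X)$.
\end{proof}
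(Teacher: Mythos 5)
Your proof is correct and follows essentially the same route as the paper: non-defectivity from \cite[Theorem 1.1]{AB} in the range $n>r(m,n)$, the observation that $\Sec_h(X)\subsetneq\p^N$ and $s(X)\geq 2(m+n)=2\dim X$, and then a direct application of Corollary~\ref{cor:twd_defect}. Your extra bookkeeping about the boundary case $s(X)=2\dim X$ is a reasonable refinement that the paper itself glosses over, but it does not change the argument.
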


\begin{proof} In our range $X$ is not $h$-defective by \cite[Theorem
  1.1]{AB} and $\Sec_h(X)\subsetneq\p^N$.
  Moreover
  $$s(X)=\lfloor\frac{(m+1)\tbinom{n+2}{2}}{m+n+1}\rfloor\geq
  2(m+n)=2\dim X$$
and we may apply Corollary~\ref{cor:twd_defect} to conclude.
\end{proof}

Let us consider now the case of $\mathbb{P}^{m}\times\mathbb{P}^{n}$
embedded with $\mathcal{O}(1,d)$ for $d\geq3$.
\begin{cor}
Let $X=\sum(1,d;m,n)$ with $d\geq3$ and $m,n\geq1$. Let $$s(X)=max\left\{ s\in\mathbb{N}|\text{s is a multiple of $(m+1)$ and $s$}\leq\lfloor\frac{(m+1)\binom{n+d}{d}}{m+n+1}\rfloor\right\}$$
If $s(X)>2(m+n)$ then $X$ is not $h$-twd and hence $h$-identifiable
for $h<s(X)$.
\end{cor}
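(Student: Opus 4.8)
The plan is to follow verbatim the pattern of the three preceding corollaries: import a non‑defectivity statement for $X=\Sigma(1,d;m,n)$ in the appropriate range and feed it into Corollary~\ref{cor:twd_defect}. First I would record the trivial facts: $X$ is a smooth variety of dimension $m+n$, so by the first hypothesis of Corollary~\ref{cor:twd_defect} (or the Remark on $1$‑twd varieties) it is not $1$‑twd. Next comes the only genuine input, namely the known non‑defectivity of $\mathbb{P}^m\times\mathbb{P}^n$ embedded by $\mathcal O(1,d)$ with $d\geq 3$: in the regime where the secant order is a multiple of $m+1$ one has $\dim\Sec_h(X)=\min\{h(m+n+1)-1,N\}$ for all $h\leq s(X)$. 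The reason the definition of $s(X)$ insists on divisibility by $m+1$ is precisely that this is the sharp range in which non‑defectivity is available: the ``unbalanced'' factor $\mathbb{P}^m$ forces whatever defect may occur to be governed by residues modulo $m+1$.

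Then I would extract the numerics. Since $s(X)(m+n+1)\leq(m+1)\binom{n+d}{d}=N+1$, for every $h\leq s(X)$ we get $\dim\Sec_h(X)=h(m+n+1)-1\leq N$, which is the dimension of the domain $\sec_h(X)$; hence $\pi^X_h$ is generically finite for all such $h$, and moreover $\Sec_h(X)\subsetneq\p^N$ for $h<s(X)$. In particular $\pi^X_{s(X)}$ is generically finite, and the hypothesis $s(X)>2(m+n)=2\dim X$ is exactly the condition ``$k>2\dim X$'' in the last assertion of Corollary~\ref{cor:twd_defect}.

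Applying Corollary~\ref{cor:twd_defect} with $k=s(X)$ then gives that $X$ is not $(s(X)-1)$‑twd. Since being $j$‑twd implies being $(j+1)$‑twd, this propagates downward: $X$ is not $h$‑twd for every $h<s(X)$. Finally Proposition~\ref{pro:non_twd_implica_ident} upgrades ``not $h$‑twd'' to ``$h$‑identifiable'', which is the assertion.

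The step I expect to be the real obstacle is not the geometry — everything after the non‑defectivity statement is bookkeeping identical to the previous corollaries — but matching the range of the cited non‑defectivity theorem to $s(X)$: one must be sure it covers all multiples of $m+1$ up to $\lfloor\frac{(m+1)\binom{n+d}{d}}{m+n+1}\rfloor$. If a finite list of small exceptional triples $(m,n,d)$ escapes the general statement, they should either fall outside the hypothesis $s(X)>2(m+n)$, or be disposed of by a direct computation as in \cite{COV1}; I would check this explicitly before claiming the corollary in full generality.
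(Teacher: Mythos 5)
Your proposal is correct and follows essentially the same route as the paper: the paper simply invokes the non-defectivity result of Bernardi--Carlini--Catalisano (\cite[Theorem 2.3]{BCC}, which is exactly the statement you describe, with the multiple-of-$(m+1)$ range), notes that $X$ is smooth and hence not $1$-twd, checks $s(X)>2(m+n)=2\dim X$, and concludes by Corollary~\ref{cor:twd_defect}. Your additional bookkeeping (generic finiteness of $\pi^X_h$, $\Sec_h(X)\subsetneq\p^N$ for $h<s(X)$, and the downward propagation of non-twd) is exactly what the paper leaves implicit.
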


\begin{proof}
By \cite[Theorem 2.3]{BCC} $X$ is not $h$-defective for $h\leq s(X)$
and $\mathbb{S}ec_{h}(X)\subsetneq\mathbb{P}^{(m+1)\binom{n+d}{d}-1}$.

$X$ is smooth, in particular it is not $1$-twd.
Since $$s(X)>2(m+n)=2dim(X)$$
we can apply Corollary \ref{cor:twd_defect} to conclude.
\end{proof}

\begin{rmk}
Similar statements about subgeneric identifiability of $\p^{n} \times \p^{1}$ embedded with $\mathcal{O}(a,b)$ can be derived applying Corollary \ref{cor:twd_defect}
using the non defectivity results in \cite{BBC1}.
\end{rmk}

Finally we consider Grassmannian varieties.  For this class of tensor
spaces very few is known about identifiability. To the best of our
knowledge the following is the first non computer aided result for them.

\begin{thm}
Let $X=\mathbb{G}(k,n)$ such that $2k+1\leq n$. Assume that 
$$\lfloor \left(\frac{n+1}{k+1}\right)^{\lfloor log_{2}(k)\rfloor}\rfloor\geq2(n-k)(k+1)$$.
Then $X$ is $h-$identifiable for 
$$h\leq\left(\frac{n+1}{k+1}\right)^{\lfloor log_{2}(k)\rfloor}-1$$
\end{thm}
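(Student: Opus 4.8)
The plan is to mirror exactly the strategy used for Segre and Segre-Veronese varieties earlier in this section: combine a known non-defectivity result for Grassmannians with the main identifiability engine, Corollary~\ref{cor:twd_defect}. First I would recall that $X=\mathbb{G}(k,n)$ is smooth, hence in particular not $1$-twd (it is certainly not covered by linear spaces when $2k+1\leq n$, being a homogeneous variety of Picard number one). This puts us in the hypotheses of Corollary~\ref{cor:twd_defect}, so it suffices to produce an integer $k_0$ with $\pi^X_{k_0}$ generically finite and $k_0>2\dim X$; then $X$ is not $h$-twd, hence $h$-identifiable, for all $h<k_0$, and in particular for $h\leq \left(\frac{n+1}{k+1}\right)^{\lfloor \log_2 k\rfloor}-1$.

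The non-defectivity input is an Abo--Boralevi--Ottaviani--type bound for Grassmannians: $\mathbb{G}(k,n)$ is not $h$-defective for
$$h\leq\left(\frac{n+1}{k+1}\right)^{\lfloor \log_2 k\rfloor}-(n-k)(k+1)+\text{(lower order terms)},$$
the same shape of estimate that underlies \cite[Theorem 1.1]{AMR} and is the natural analogue here. Concretely I would set $k_0:=\lfloor \left(\frac{n+1}{k+1}\right)^{\lfloor \log_2 k\rfloor}\rfloor$ and check two things: (i) $k_0\leq s(\mathbb{G}(k,n))$, so that by the remark relating $s$ and $gr$ we have $\Sec_{k_0}(X)\subsetneq\p^N$ and the relevant secant maps are generically finite (non-defectivity together with $\Sec\subsetneq\p^N$ forces $\pi^X_h$ generically finite for $h\leq k_0$); (ii) the numerical hypothesis $\lfloor \left(\frac{n+1}{k+1}\right)^{\lfloor \log_2 k\rfloor}\rfloor\geq 2(n-k)(k+1)=2\dim\mathbb{G}(k,n)$, which is precisely $k_0\geq 2\dim X$. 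With both in hand, Corollary~\ref{cor:twd_defect} (the ``$k>2\dim X$'' branch, or the ``$\pi^X_k$ not dominant and $k\geq 2\dim X$'' branch, using $\Sec_{k_0}(X)\subsetneq\p^N$) gives that $X$ is not $(k_0-1)$-twd, and then Proposition~\ref{pro:non_twd_implica_ident} yields $h$-identifiability for every $h\leq k_0-1$, which is the claimed range.

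The main obstacle is bookkeeping on the non-defectivity range rather than anything conceptual: one must cite the sharpest available non-defectivity theorem for $\mathbb{G}(k,n)$ and verify that its range of validity comfortably contains $k_0=\lfloor \left(\frac{n+1}{k+1}\right)^{\lfloor \log_2 k\rfloor}\rfloor$, which amounts to checking that the $(n-k)(k+1)$-type correction term is dominated once the standing inequality $\left(\frac{n+1}{k+1}\right)^{\lfloor \log_2 k\rfloor}\geq 2(n-k)(k+1)$ holds; this is a routine comparison. A secondary point to dispatch is the edge behaviour of $\pi^X_k$ (dominant versus not): since $k_0\leq s(X)$ forces $\Sec_{k_0}(X)\subsetneq\p^N$, the ``not dominant'' branch of Corollary~\ref{cor:twd_defect} applies with $k=k_0\geq 2\dim X$, so no strict inequality $k_0>2\dim X$ is even needed. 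Everything else is a direct invocation of the machinery already assembled in Section~\ref{sec:main}.
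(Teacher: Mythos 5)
Your proposal is correct and follows essentially the same route as the paper: invoke a non-defectivity theorem for Grassmannians, note that the numerical hypothesis says exactly $\lfloor (\tfrac{n+1}{k+1})^{\lfloor \log_2 k\rfloor}\rfloor\geq 2(n-k)(k+1)=2\dim X$, and conclude via Corollary~\ref{cor:twd_defect} together with Proposition~\ref{pro:non_twd_implica_ident}. The only difference is the citation: the paper uses \cite[Theorem 5.4]{MR} (Massarenti--Rischter, which directly covers the stated range under $2k+1\leq n$ and gives $\Sec_h(X)\subsetneq\p^N$), so the bookkeeping on a correction term that you anticipated does not actually arise.
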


\begin{proof}
By \cite[Theorem 5.4]{MR} in our numerical range $X$ is not
$h-$defective and $\Sec_h(X)\subsetneq \p^N$. Then we conclude by
Corollary~\ref{cor:twd_defect}.
\end{proof}

The technique we developed can be applied to many other classes of
varieties, once it is known their defectivity behavior. 
As a sample we conclude with  the following example.
\begin{es}
C. Am\'endola, J.-C. Faugère, K. Ranestad and B. Sturmfels in \cite{AFS} and \cite{ARS} studied the Gaussian moment variety
$$\mathcal{G}_{1,d}\subset\mathbb{P}^{d}$$
whose points are the vectors of all moments of degree $\leq d$ of a $1-$dimensional Gaussian
distribution. 
They proved that $\mathcal{G}_{1,d}$ is a surface for every
$d$  and $\mathbb{S}ec_{h}(\mathcal{G}_{1,d})$
has always the expected dimension.
In \cite[Example 5.8]{BBC} it is shown that $\mathcal{G}_{1,d}$ is not
uniruled by lines, in particular it is not $1$-twd. As usual let
$$s(\mathcal{G}_{1,d})=\lfloor\frac{d+1}3\rfloor\geq gr(\mathcal{G}_{1,d})-1 $$
Then by Corollary~\ref{cor:twd_defect} $\mathcal{G}_{1,d}$ is
$h$-identifiable, for $h<s(\mathcal{G}_{1,d})$ when $d\geq 14$.
\end{es}

\end{document}